\newtheorem{prop}{Proposition}[section]
\newtheorem{thm}[prop]{Theorem}
\newtheorem{cor}[prop]{Corollary}
\theoremstyle{remark}
\newtheorem{rem}[prop]{Remark}
\newcommand{\cst}{\mathrm{C^*}}
\newcommand{\tens}{\otimes}
\newcommand{\id}{\mathrm{id}}
\newcommand{\CC}{\mathbb{C}}
\newcommand{\RR}{\mathbb{R}}
\newcommand{\TT}{\mathbb{T}}
\newcommand{\NN}{\mathbb{N}}
\newcommand{\sS}{\mathbb{S}}
\newcommand{\ZZ}{\mathbb{Z}}
\newcommand{\eps}{\varepsilon}
\newcommand{\I}{\mathbb{1}}
\newcommand{\gA}{\boldsymbol{A}}
\newcommand{\bC}{\boldsymbol{C}}
\newcommand{\bu}{\boldsymbol{u}}
\newcommand{\bn}{\boldsymbol{n}}
\newcommand{\bepsilon}{\boldsymbol{\epsilon}}
\newcommand{\gothA}{\mathfrak{A}}
\newcommand{\gB}{\mathfrak{B}}
\newcommand{\gX}{\mathfrak{X}}
\newcommand{\gC}{\mathfrak{C}}
\newcommand{\cB}{\mathcal{B}}
\newcommand{\cG}{\mathcal{G}}
\newcommand{\cH}{\mathcal{H}}
\newcommand{\cK}{\mathcal{K}}
\newcommand{\cM}{\mathcal{M}}
\newcommand{\bDelta}{\boldsymbol{\Delta}}
\newcommand{\bPhi}{\boldsymbol{\Phi}}
\newcommand{\cA}{\mathcal{A}}
\newcommand{\comp}{\!\circ\!}
\renewcommand{\Bar}[1]{\overline{#1}}
\newcommand{\st}{\:\vline\:}
\newcommand{\is}[2]{\left(#1\,\vline\,#2\right)}
\newcommand{\tp}{\xymatrix{*+<.7ex>[o][F-]{\scriptstyle\top}}}
\newcommand{\malpha}{\mathbb{\bbalpha}}
\newcommand{\mbeta}{\mathbb{\bbbeta}}
\newcommand{\mgamma}{\mathbb{\bbgamma}}
\newcommand{\mdelta}{\mathbb{\bbdelta}}
\newcommand{\AM}{\mathbb{A}}
\newcommand{\DM}{\mathbb{\Delta}}
\newcommand{\PM}{\mathbb{\Phi}}
\newcommand{\eM}{\mathbb{\bbespilon}}
\DeclareMathOperator{\C}{C}
\DeclareMathOperator{\B}{B}
\DeclareMathOperator{\Mor}{Mor}
\DeclareMathOperator{\Ran}{ran}
\DeclareMathOperator{\spec}{Sp}
\DeclareMathOperator{\M}{M}
\DeclareMathOperator{\QMap}{\mathscr{Q}-Map}
\DeclareMathOperator{\qs}{\mathscr{QS}}
\DeclareMathOperator{\SO}{SO}
\newcommand{\SqU}{\mathrm{S}_q\mathrm{U}(2)}
\newcommand{\SqO}{\mathrm{S}_q\mathrm{O}(3)}
\newcommand{\om}{{\omega_q}}
\newcommand{\ba}{\boldsymbol{a}}
\newcommand{\dA}{{\dot{A}}}
\newcommand{\dC}{{\dot{C}}}
\newcommand{\dG}{{\dot{G}}}
\newcommand{\dK}{{\dot{K}}}
\newcommand{\dL}{{\dot{L}}}
\DeclareMathOperator{\tr}{tr}
\begin{document}

\title{Quantum $\mathrm{SO}(3)$ groups and quantum group actions on $M_2$}

\date{\today}

\author{Piotr M.~So{\l}tan}
\address{Department of Mathematical Methods in Physics\\
Faculty of Physics\\
Warsaw University}
\email{piotr.soltan@fuw.edu.pl}

\thanks{Research partially supported by Polish government grants
no.~115/E-343/SPB/6.PRUE/DIE50/2005-2008 and N201 1770 33.}

\maketitle

\begin{abstract}
Answering a question of Shuzhou Wang we give a description of quantum $\SO(3)$ groups of Podle\'s as universal objects. We use this result to give a complete classification of all continuous compact quantum group actions on $M_2$.
\end{abstract}

\section{Introduction}

The classical group $\SO(3)$ has many useful descriptions. For example $\SO(3)$ is the automorphism group of the $\cst$-algebra $M_2$ of $2\times{2}$ complex matrices. In this paper we will focus on \emph{quantum} $\SO(3)$ groups first defined by Piotr Podle\'s in \cite{spheres}.

In his paper \cite{wang} Shuzhou Wang asked if the quantum $\mathrm{SU}(2)$ group could be described as a quantum automorphism group of $M_2$ endowed with a collection of functionals (\cite[Remark (4) on page 209]{wang}). We solve this problem after a necessary modification. Namely the quantum $\mathrm{SU}(2)$ must be replaced by the quantum $\SO(3)$ group (the confusion stems from an erroneous identification in \cite{wang} of the classical $\mathrm{SU}(2)$ as the automorphism group of $M_2$).

We will show that the quantum groups $\SqO$ of Podle\'s are the universal comact quantum groups acting continuously on $M_2$ and preserving the Powers state
\begin{equation}\label{powers}
\om:M_2\ni\begin{bmatrix}m_{1,1}&m_{2,2}\\m_{2,1}&m_{2,2}\end{bmatrix}
\longmapsto\tfrac{1}{1+q^2}(m_{1,1}+q^2m_{2,2})
\end{equation}
where $q$ is any fixed number in $]0,1]$. In other words $\SqO$ is the quantum automorphism group of the quantum space underlying $M_2$ preserving the state $\om$. More details will be given in subsequent sections.

The quantum $\SO(3)$ groups have so far been defined either as quotient groups of quantum $\mathrm{SU}(2)$ or by means of a complicated system of generators and relations (see Section \ref{PSO3}). Therefore our description of these groups as universal objects in the category of compact quantum groups acting continuously on $M_2$ and preserving the Powers state yields a possible alternative approach to these quantum groups which avoids some technical complexity.

Furthermore the new description of Podle\'s quantum $\SO(3)$ groups gives, together with some other results, a complete description of continuous compact quantum group actions on two by two complex matrices (see Subsection \ref{statement} and Section \ref{last}).

\subsection{Terminology}

\subsubsection{Quantum spaces} We will use the language of quantum groups and quantum spaces (cf.~\cite{pseu,su2,spheres,podles,qs}). A quantum space is, by definition, an object of the category dual to the category of $\cst$-algebras, as defined in \cite[Section 0]{unbo} (see also \cite{pseu}). In this category a morphism from a $\cst$-algebra $\gothA$ to a $\cst$-algebra $\gC$ is a nondegenerate $*$-homomorphism from $\gothA$ to $\M(\gC)$, where $\M(\gC)$ is the multiplier algebra of $\gC$. However, in this paper all $\cst$-algebras will be unital (in other words all quantum spaces will be compact) which implies that morphisms will simply be unital $*$-homomorphisms. We will write $\Mor(\gothA,\gC)$ for the set of all morphisms from $\gothA$ to $\gC$. For a given $\cst$-algebra $\gC$ we will write $\qs(\gC)$ for the corresponding quantum space. Let us stress that by introducin the notion of a quantum space we are not defining a new concept, but simply creating a language in which it seems easier to express some mathematical ideas. Following Piotr Podle\'s (\cite{spheres,podles}) we will sometimes use notation $\C(\gX)$, where $\gX$ is some quantum space. Examples of this are the well known $\cst$-algebras $\C\bigl(\SqO\bigr)$ and $\C\bigl(\SqU\bigr)$ of \cite{spheres,podles} and \cite{su2}.

\subsubsection{Quantum families of maps, quantum semigroups and their actions}
We will consider compact quantum groups and semigroups and their actions on quantum spaces. These are particular cases of \emph{quantum families of maps} defined and studied in detail in \cite{qs}. To explain this notion let us consider three quantum spaces $\qs(N)$, $\qs(M)$ and $\qs(C)$ (in other words let $N,M$ and $C$ be $\cst$-algebras). A \emph{quantum family of maps} $\qs(M)\to\qs(N)$ labeled by $\qs(C)$ is a morphism $\Psi\in\Mor(N,M\tens{C})$. This notions was introduced already in \cite{pseu} and it generalizes the classical notion of a continuous family of continuous maps of locally compact spaces. In case $M$ is finite dimensional and $N$ is finitely generated and unital there exists a special quantum space $\qs(\gC)$ and a quantum family $\Phi\in\Mor(N,M\tens\gC)$ called the \emph{quantum family of all maps} $\qs(M)\to\qs(N)$. It is distinguished by the property that for any $\cst$-algebra $D$ and any $\Psi_{D}\in\Mor(N,M\tens{D})$ there exists a unique $\Lambda\in\Mor(\gC,D)$ such that the diagram 
\begin{equation*}
\xymatrix{
N\ar[rr]^-{\Phi}\ar@{=}[d]&&M\tens\gC\ar[d]^{\id_{M}\tens\Lambda}\\
N\ar[rr]^-{\Psi_{D}}&&M\tens{D}}
\end{equation*}
is commutative. The quantum space $\qs(\gC)$ labeling $\Phi$ is called the \emph{quantum space of all maps} $\qs(M)\to\qs(N)$. In \cite{qs} we show that for $N=M$ the quantum space of all maps $\qs(M)\to\qs(M)$ is a compact quantum semigroup with unit. In other words there exists a comultiplication $\Delta_{\gC}\in\Mor(\gC,\gC\tens\gC)$ and a counit $\epsilon_{\gC}\in\Mor(\gC,\CC)$. Moreover $\Phi$ is then an \emph{action} of $(\gC,\Delta_{\gC})$ on $M$:
\[
(\Phi\tens\id_{\gC})\comp\Phi=(\id_{M}\tens\Delta_{\gC})\comp\Phi.
\]
As in \cite{qs}, the quantum semigroup of all maps $\qs(M)\to\qs(M)$ will be denoted by the symbol $\QMap\bigl(\qs(M)\bigr)$.

In \cite{qs} we also studied some subsemigroups of $\QMap\bigl(\qs(M)\bigr)$. Those important for this paper are the semigroups preserving distinguished states on $M$. Let $\omega$ be a state on $M$ and let $\Psi\in\Mor(M,M\tens{D})$ be a quantum family of maps. We say that $\Psi$ preserves $\omega$ if
\[
(\omega\tens\id_{D})\Psi(m)=\omega(m)\I
\]
for all $m\in{M}$. By \cite[Theorem 5.4]{qs} there exist a special quantum space $\qs(\bC)$ and a quantum family $\bPhi\in\Mor(M,M\tens\bC)$ with the property that for any quantum family $\Psi_D\in\Mor(M,M\tens{D})$ preserving $\omega$ there exists a unique $\Lambda\in\Mor(\bC,D)$ such that
\begin{equation*}
\xymatrix{
M\ar[rr]^-{\bPhi}\ar@{=}[d]&&M\tens\bC\ar[d]^{\id_{M}\tens\Lambda}\\
M\ar[rr]^-{\Psi_{D}}&&M\tens{D}}
\end{equation*}
is commutative. The quantum space $\qs(\bC)$ is naturally endowed with the structure of a quantum semigroup with unit which we denote by $\QMap^\omega\bigl(\qs(M)\bigr)$. Moreover $\bPhi$ is an action of $\QMap^\omega\bigl(\qs(M)\bigr)$ on $\qs(M)$.

Let $(\gB,\Delta_{\gB})$ and $(\gC,\Delta_{\gC})$ be two quantum semigroups. A morphism $\Gamma\in\Mor(\gB,\gC)$ is called a \emph{quantum semigroup morphism} if 
\begin{equation}\label{star}
(\Gamma\tens\Gamma)\comp\Delta_{\gB}=\Delta_{\gC}\comp\Gamma.
\end{equation}
In case $(\gB,\Delta_{\gB})$ and $(\gC,\Delta_{\gC})$ are compact quantum groups (\cite[Definition 2.1]{cqg}) a morphism $\Gamma$ satisfying \eqref{star} is called a \emph{quantum group morphism.}

\subsubsection{Actions of compact quantum groups and the Podle\'s condition}
In this paper we will mainly consider actions of compact quantum \emph{groups} on quantum spaces. Let $\cG=(\gB,\Delta_{\gB})$ be a compact quantum group and let $M$ be a $\cst$-algebra. As in the case of quantum semigroup actions we say that $\Psi_{\gB}\in\Mor(M,M\tens\gB)$ is an action of $\cG$ on $\qs(M)$ if
\[
(\Psi_{\gB}\tens\id_{\gB})\comp\Psi_{\gB}=(\id_{M}\tens\Delta_{\gB})\comp\Psi_{\gB}.
\]
We say that $\Psi_{\gB}$ satisfies \emph{Podle\'s condition} if the set
\[
\bigl\{\Psi_{\gB}(m)(\I\tens{b})\st{m\in{M}},\;b\in\gB\bigr\}
\]
is linearly dense in $M\tens\gB$. This condition was formulated by Piotr Podle\'s in \cite[Definicja 2.2]{podPHD}. It appeared in many later publications by various authors (e.g.~\cite[Definition 1]{boca}, \cite[Definition 3.1]{marciniak}, \cite[Definition 2.6]{vaes}). An even stronger definition of an action was used in \cite[Definition 2.1]{wang}.
Following \cite{vaes} we will call actions of compact quantum groups satisfying the Podle\'s condition \emph{continuous actions.} This condition is always satisfied by actions of classical groups.

An important class of compact quantum group actions are the \emph{ergodic} actions. Since actions are particular cases of quantum families of maps, let us define the notion of an ergodic quantum family. Let $M$ be a $\cst$-algabra and let $\Psi_D\in\Mor(M,M\tens{D})$ be a quantum family of maps $\qs(M)\to\qs(M)$ labelled by a quantum space $\qs(D)$. We say that $\Psi_D$ is \emph{ergodic} if for all $m\in{M}$ the condition that $\Psi_D(m)=m\tens\I$ implies that $m\in\CC\I$. 

\subsection{Statement of main results}\label{statement}

For $q\in[0,1]$ let $\om$ be the state on $M_2$ introduced by \eqref{powers}. When $q>0$, the quantum group $\SqO$ acts continuously on $\qs(M_2)$ preserving $\om$. We denote the morphism describing this action by $\Psi_q$. The core result of this paper can be stated in the following way:

\begin{thm}\label{mainR}
Fix $q\in]0,1]$. Let $\cG=(\gB,\Delta_{\gB})$ be a compact quantum group and let $\Psi_{\gB}\in\Mor(M_2,M_2\tens\gB)$ be a continuous action of $\cG$ on $M_2$ preserving the state $\om$. Then there exists a unique $\Gamma\in\Mor\bigl(\C(\SqO),\gB\bigr)$ such that
\[
(\id_{M_2}\tens\Gamma)\comp\Psi_q=\Psi_{\gB}.
\]
Moreover $\Gamma$ is a quantum group morphism.
\end{thm}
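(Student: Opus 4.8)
The plan is to realize $\C(\SqO)$ concretely as a quotient of the universal object $\bC$ for $\omega$-preserving quantum families of maps on $M_2$, and then to produce $\Gamma$ by composing the universal arrow associated to $\Psi_{\gB}$ with a suitable quantum group morphism. First I would invoke \cite[Theorem 5.4]{qs}: since $\Psi_{\gB}$ is in particular a quantum family of maps $\qs(M_2)\to\qs(M_2)$ preserving $\om$, there exists a unique $\Lambda\in\Mor(\bC,\gB)$ with $(\id_{M_2}\tens\Lambda)\comp\bPhi=\Psi_{\gB}$, where $\bPhi\in\Mor(M_2,M_2\tens\bC)$ is the universal $\om$-preserving family and $(\bC,\Delta_{\bC})=\QMap^\om\bigl(\qs(M_2)\bigr)$. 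The whole argument then reduces to relating $(\bC,\Delta_{\bC})$ to $\C(\SqO)$: the key structural claim, which I expect to establish earlier in the paper, is that $\SqO$ is precisely the \emph{universal compact quantum group} quotient of this semigroup, i.e.\ that there is a canonical quantum semigroup morphism $\pi\in\Mor(\bC,\C(\SqO))$ realizing $\Psi_q=(\id_{M_2}\tens\pi)\comp\bPhi$, and that $\pi$ is universal among morphisms from $\bC$ into compact quantum groups.

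With that structure in place I would argue as follows. The action $\Psi_{\gB}$ takes values in a compact quantum group $\gB$, so the universal arrow $\Lambda:\bC\to\gB$ should factor through the quantum-group quotient $\pi:\bC\to\C(\SqO)$, yielding $\Gamma\in\Mor(\C(\SqO),\gB)$ with $\Lambda=\Gamma\comp\pi$. Then
\begin{align*}
(\id_{M_2}\tens\Gamma)\comp\Psi_q
&=(\id_{M_2}\tens\Gamma)\comp(\id_{M_2}\tens\pi)\comp\bPhi\\
&=(\id_{M_2}\tens\Lambda)\comp\bPhi=\Psi_{\gB},
\end{align*}
which is exactly the required identity. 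The factorization of $\Lambda$ through $\pi$ is where the Podle\'s (continuity) hypothesis on $\Psi_{\gB}$ must be used: continuity of the action is the feature that forces the image of $\bC$ under $\Lambda$ to lie in the ``group part'' and hence kills precisely the relations that $\pi$ annihilates. Concretely, continuity should guarantee the analogues of cancellation/invertibility (density of $\{\Psi_{\gB}(m)(\I\tens b)\}$) that correspond on the universal side to the generator relations defining $\SqO$ rather than merely the semigroup $\bC$.

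For uniqueness of $\Gamma$, I would use that $\pi$ is an epimorphism in the relevant category, equivalently that the images $\pi(\bC)$ generate $\C(\SqO)$, so that $\Gamma$ is determined on a generating set by the intertwining relation $(\id_{M_2}\tens\Gamma)\comp\Psi_q=\Psi_{\gB}$; applying a faithful state or a separating family of slice maps $(\theta\tens\id_{\gB})$ for $\theta$ ranging over functionals on $M_2$ recovers $\Gamma$ on the generators of $\C(\SqO)$ coming from the matrix entries of $\Psi_q$. Finally, to see that $\Gamma$ is a quantum group morphism, i.e.\ satisfies \eqref{star}, I would exploit the action identities: both $(\Gamma\tens\Gamma)\comp\Delta_{\C(\SqO)}$ and $\Delta_{\gB}\comp\Gamma$ intertwine $\Psi_q$ with $(\id_{M_2}\tens\Delta_{\gB})\comp\Psi_{\gB}$ in the sense appropriate to the double-action coassociativity $(\Psi_q\tens\id)\comp\Psi_q=(\id_{M_2}\tens\Delta)\comp\Psi_q$, so by the uniqueness part of the universal property (now applied with the codomain $\gB\tens\gB$) the two morphisms must coincide. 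I expect the genuine obstacle to be the factorization step: proving that continuity of $\Psi_{\gB}$ is exactly what is needed to push $\Lambda$ through the quantum-group quotient $\pi$, which presumably rests on an explicit earlier analysis of how the relations of $\SqO$ arise from the $\om$-preserving condition together with the Podle\'s density requirement.
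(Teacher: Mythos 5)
Your overall architecture does agree with the paper's: the first step (obtaining $\Lambda\in\Mor(\gA,\gB)$ with $(\id_{M_2}\tens\Lambda)\comp\bPhi=\Psi_{\gB}$ from the universal property of $\QMap^\om\bigl(\qs(M_2)\bigr)$, \cite[Theorem 5.4]{qs}) is exactly how Section \ref{char} begins, and the theorem is indeed equivalent to factorizing $\Lambda$ as $\Gamma\comp\Lambda_q$. The genuine gap is at precisely the point you flag and then defer: the ``key structural claim'' that $\Lambda_q\in\Mor\bigl(\gA,\C(\SqO)\bigr)$ is a universal compact-quantum-group quotient of the semigroup $(\gA,\bDelta)$ is not something you can invoke. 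It is not proved anywhere in the paper, it is not a known general existence result (a compact quantum semigroup has no obvious ``maximal quantum group quotient''), and even if some such quotient existed abstractly, identifying it with $\C\bigl(\SqO\bigr)$ \emph{is} the content of Theorem \ref{mainR} --- so, as used, your argument is circular. Worse, your formulation (every morphism from $\gA$ into a compact quantum group factors through $\Lambda_q$) is strictly stronger than the theorem, since it makes no reference to the Podle\'s condition; the paper's proof uses continuity essentially and never claims the statement without it.

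What actually fills this gap in the paper is a concrete computation, none of which appears in your proposal. First (Proposition \ref{gladkie}), continuity of $\Psi_{\gB}$ is used, via the coaction results of \cite{marciniak}, \cite{podles} and \cite{PodMu}, to show that $b=\Lambda(\beta)$, $c=\Lambda(\gamma)$, $d=\Lambda(\delta)$ lie in the canonical dense Hopf $*$-algebra $\cB\subset\gB$, which makes the counit and the antipode $\kappa_{\cB}$ available. Second (Proposition \ref{aProp}), the $4\times4$ matrix $\ba$ of $\Psi_{\gB}$ in the $\om$-orthonormal basis $\{\bn\bn^*,\bn,\bn^*,\bn^*\bn\}$ is shown to be unitary: it is isometric because $\Psi_{\gB}$ preserves $\om$, and invertible because $\kappa_{\cB}$ applied entrywise produces a right inverse. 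Third (Theorem \ref{main}), one sets $\dA=qd$, $\dC=bd^*-q^2dc^*$, $\dG=-q^{-1}c$, $\dK=q^2d^*d+q^{-2}c^*c$, $\dL=b$ and verifies \emph{all} of the Podle\'s relations \eqref{Prel} by hand, using the unitarity of $\ba$, the explicit values of $\kappa_{\cB}$ on its entries, the Fuglede--Putnam theorem (Theorem \ref{FPR}), and Proposition \ref{CK} (which recovers $CK=q^2KC$ from the other relations); only then does $\Gamma$ exist, by the universal property of $\C\bigl(\SqO\bigr)$ as a $\cst$-algebra given by generators and relations. Note also that Theorem \ref{mainR} covers $q=1$, where Proposition \ref{CK} fails (Remark \ref{CKrems}\eqref{CKrem2}) and the paper must run a separate argument in Subsection \ref{q1} (normality and commutativity of $b,c,d$, identification of the universal object with classical $\SO(3)$); your proposal draws no such distinction. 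Your uniqueness argument (the entries of $\Psi_q(\bn)$ involve $A,G,L$, which generate $\C\bigl(\SqO\bigr)$) and your derivation of the quantum-group-morphism property from coassociativity do match the paper's corollary, but both are downstream of the missing central step.
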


In other words the quantum group $\SqO$ is the universal quantum group acting continuously on $M_2$ and preserving the Powers state $\om$.

In Subsection \ref{q0} treat the case of $q=0$ and show that the universal quantum group acting continuously on $\qs(M_2)$ and preserving $\omega_0$ is the classical group $\TT$. With these results we give a complete description of all continuous compat quantum group actions om $\qs(M_2)$ in Section \ref{last}. A special uniqueness result is given for ergodic actions (cf.~Theorem \ref{dzialania}).

\subsection{A tool from operator theory}

\begin{thm}[Fuglede-Putnam-Rosenbloom]\label{FPR}
Let $\gC$ be a $\cst$-algebra and let $n_1,n_2,a\in\gC$. Assume that $n_1$ and $n_2$ are normal and
that $an_1=n_2a$. Then $an_1^*=n_2^*a$.
\end{thm}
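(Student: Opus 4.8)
The plan is to prove the Fuglede--Putnam--Rosenbloom theorem using the representation-free, purely $\cst$-algebraic technique that replaces the classical spectral-theoretic argument. First I would reduce the statement to the single-operator Fuglede case is \emph{not} the route I will take; instead I would exploit the entire function $z\mapsto e^{iz}$ applied to self-adjoint combinations. Concretely, for $t\in\RR$ consider the unitaries $u_j=\exp(it n_j^{\phantom{*}}+i\Bar{t}\,n_j^*)$; since $n_1,n_2$ are normal, the exponents $t n_j+\Bar{t}n_j^*$ are self-adjoint (for $t\in\RR$ one may simply take $t(n_j+n_j^*)$, but the two-parameter version is cleaner), so $u_1,u_2$ are genuine unitaries in $\M(\gC)$ or in the unitization of $\gC$.

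The key computation is to define the $\gC$-valued function $f(t)=e^{-it n_2^*}\,a\,e^{it n_1^*}$ and to show, using the hypothesis $an_1=n_2a$, that $f$ is bounded on all of $\RR$ (indeed on all of $\CC$ after the natural complexification) and entire. First I would verify that the intertwining relation $an_1=n_2a$ propagates to $a\,e^{it n_1}=e^{it n_2}\,a$ for every $t$, by expanding the exponential series and using $a n_1^k=n_2^k a$ inductively. Then I would rewrite $f(t)=e^{-it n_2^*}\bigl(e^{it n_2}a\,e^{-it n_1}\bigr)e^{it n_1^*}=\bigl(e^{-it n_2^*}e^{it n_2}\bigr)a\bigl(e^{-it n_1}e^{it n_1^*}\bigr)$, and here normality enters: because $n_j$ commutes with $n_j^*$, the factors $e^{-it n_j^*}e^{it n_j}$ are \emph{unitary} for real $t$, so $\|f(t)\|\le\|a\|$ uniformly in $t\in\RR$.

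The analytic heart of the argument is then a Liouville-type step. I would check that $t\mapsto f(t)$ extends to an entire $\gC$-valued function of a complex variable and that, after composing with any bounded linear functional $\varphi\in\gC^*$, the scalar function $t\mapsto\varphi(f(t))$ is entire and bounded on $\RR$; combining the two expressions for $f$ shows it is in fact bounded on the whole plane, hence constant by the vector-valued Liouville theorem applied functional-by-functional. Since $f(0)=a$, we get $f(t)\equiv a$, i.e.\ $e^{-it n_2^*}a\,e^{it n_1^*}=a$, equivalently $a\,e^{it n_1^*}=e^{it n_2^*}a$ for all real $t$. Differentiating this identity at $t=0$ (justified because the exponential series converges in norm and may be differentiated term by term) yields $a n_1^*=n_2^* a$, which is exactly the desired conclusion.

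The step I expect to be the main obstacle is the rigorous justification of entireness and of the boundedness estimate in the abstract $\cst$-setting, where I cannot lean on spectral measures. The care needed is to (i) confirm that the exponentials of self-adjoint elements are unitary purely from the functional calculus in the (possibly unitized) $\cst$-algebra, (ii) confirm that normality $n_jn_j^*=n_j^*n_j$ is genuinely what makes $e^{-it n_j^*}e^{it n_j}$ unitary rather than merely bounded, and (iii) apply Liouville to the entire vector-valued function without invoking any Hilbert space representation. Once these analytic points are secured the algebra is routine, so the essential difficulty is packaging the classical ``bounded entire function is constant'' idea so that it runs in an arbitrary $\cst$-algebra rather than in $\B(\cH)$.
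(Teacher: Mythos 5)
Your overall strategy --- Rosenblum's exponential/Liouville argument --- is the right one; in fact it is essentially the proof in \cite[Section 12.16]{rud}, which is all the paper offers, since the paper does not prove Theorem \ref{FPR} but merely cites Rudin. However, your execution has a genuine gap at the crucial estimate. You claim that, by normality, the factors $e^{-itn_j^*}e^{itn_j}$ are unitary for real $t$. This is false: since $n_j$ and $n_j^*$ commute, $e^{-itn_j^*}e^{itn_j}=e^{it(n_j-n_j^*)}$, and for real $t$ the exponent $it(n_j-n_j^*)$ is \emph{self-adjoint} (the adjoint of $it(n_j-n_j^*)$ is $-it(n_j^*-n_j)$, which is the same element), not skew-adjoint; hence this factor is a positive element, not a unitary, and its norm can grow exponentially in $t$. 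Already in $\gC=\CC$ with $n_1=n_2=i$ one gets $e^{-itn_j^*}e^{itn_j}=e^{-2t}$, of modulus $\neq 1$. Consequently your bound $\|f(t)\|\le\|a\|$ on $\RR$ is not established, and the subsequent step ``combining the two expressions for $f$ shows it is bounded on the whole plane'' has nothing to stand on: your two expressions are the raw definition, which gives no bound anywhere, and this faulty factorization.

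The repair is exactly the two-parameter device you mention in your first paragraph and then abandon. The intertwining relation gives $a=e^{wn_2}ae^{-wn_1}$ for \emph{every} complex $w$, and for fixed complex $t$ you must substitute with the conjugate parameter $w=-i\Bar{t}$ (not $w=it$). This yields
\[
f(t)=e^{-itn_2^*}\,a\,e^{itn_1^*}=e^{-i(tn_2^*+\Bar{t}n_2)}\,a\,e^{i(\Bar{t}n_1+tn_1^*)},
\]
and now the exponents are $\pm i$ times the self-adjoint elements $tn_j^*+\Bar{t}n_j$, so both factors are genuinely unitary (in the unitization of $\gC$) for \emph{all} $t\in\CC$. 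This gives $\|f(t)\|\le\|a\|$ on the whole plane, and Liouville applied functional-by-functional plus differentiation at $0$ finishes exactly as you describe. Note that the conjugate parameter is not cosmetic: even after fixing your sign error (replacing your $f$ by $e^{itn_2^*}ae^{-itn_1^*}$, whose factorization with $w=it$ does produce unitaries for real $t$), boundedness on $\RR$ alone proves nothing, since an entire function bounded on the real line need not be constant ($\sin z$, for instance). Unitarity of the factors for every complex $t$, i.e.\ the $\Bar{t}$-substitution, is the indispensable point of the proof, and it is precisely the point your write-up elides.
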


The proof of Theorem \ref{FPR} can be found in \cite[Section 12.16]{rud}. We will only use this theorem in the very special case when $n_2=\lambda{n_1}$ for some fixed $\lambda\in\RR$. Therefore whenever $n$ is a normal element of a $\cst$-algebra $\gC$ and 
\begin{equation*}
an=\lambda{na}
\end{equation*}
for some $a\in\gC$ then
\begin{equation*}
an^*=\lambda{n^*a}.
\end{equation*}

\section{Quantum semigroups preserving states on $M_2$}

\subsection{Quantum semigroup of all maps $\qs(M_2)\to\qs(M_2)$}

The $\cst$-algebra $M_2$ is generated by a single element $\bn$ satisfying $\bn^2=0$, $\bn\bn^*+\bn^*\bn=\I$. We take
\begin{equation*}
\bn=\begin{bmatrix}0&1\\0&0\end{bmatrix}.
\end{equation*}
In what follows we will define morphisms from $M_2$ simply by indicating the image of $\bn$.

The following proposition gives a detailed description of the compact quantum semigroup $\QMap\bigl(\qs(M_2)\bigr)$ together with its action on $\qs(M_2)$.

\begin{prop}\label{QMM2}
The quantum space $\QMap\bigl(\qs(M)\bigr)=(\AM,\DM)$, where $\AM$ is the universal $\cst$-algebra generated by four elements $\malpha,\mbeta,\mgamma$ and $\mdelta$ satisfying the relations:
\begin{equation}\label{nstnkw}
\begin{aligned}
\malpha^*\malpha+\mgamma^*\mgamma+\malpha\malpha^*+\mbeta\mbeta^*&=\I,
&\malpha^2+\mbeta\mgamma&=0,\\
\malpha^*\mbeta+\mgamma^*\mdelta+\malpha\mgamma^*+\mbeta\mdelta^*&=0,
&\malpha\mbeta+\mbeta\mdelta&=0,\\
\mbeta^*\mbeta+\mdelta^*\mdelta+\mgamma\mgamma^*+\mdelta\mdelta^*&=\I,
&\mgamma\malpha+\mdelta\mgamma&=0,\\
\mgamma\mbeta+\mdelta^2&=0.
\end{aligned}
\end{equation}
The quantum semigroup structure on $\QMap\bigl(\qs(M_2)\bigr)$ is given by $\DM\in\Mor(\AM,\AM\tens\AM)$ acting on generators in the following way:
\begin{equation}\label{BigDelta}
\begin{split}
\DM(\malpha)&=
\malpha\malpha^*\tens\malpha+\mbeta\mbeta^*\tens\malpha
+\malpha\tens\mbeta
+\malpha^*\tens\mgamma
+\malpha^*\malpha\tens\mdelta+\mgamma^*\mgamma\tens\mdelta,\\
\DM(\mbeta)&=\malpha\mgamma^*\tens\malpha+\mbeta\mdelta^*\tens\malpha
+\mbeta\tens\mbeta
+\mgamma^*\tens\mgamma
+\malpha^*\mbeta\tens\mdelta+\mgamma^*\mdelta\tens\mdelta,\\
\DM(\mgamma)&=\mgamma\malpha^*\tens\malpha+\mdelta\mbeta^*\tens\malpha
+\mgamma\tens\mbeta
+\mbeta^*\tens\mgamma
+\mbeta^*\malpha\tens\mdelta+\mdelta^*\mgamma\tens\mdelta,\\
\DM(\mdelta)&=\mgamma\mgamma^*\tens\malpha+\mdelta\mdelta^*\tens\malpha
+\mdelta\tens\mbeta
+\mdelta^*\tens\mgamma
+\mbeta^*\mbeta\tens\mdelta+\mdelta^*\mdelta\tens\mdelta,
\end{split}
\end{equation}
while the counit {$\eM$} maps $\malpha,\mgamma$ and $\mdelta$ to $0$ and $\mbeta$ to $1$.

The action $\PM\in\Mor(M_2,M_2\tens\AM)$ of $\QMap\bigl(\qs(M_2)\bigr)$ on $\qs(M_2)$ is given by
\begin{equation}\label{Phibaza}
\PM(\bn)=\bn\bn^*\tens\malpha+\bn\tens\mbeta+\bn^*\tens\mgamma+\bn^*\bn\tens\mdelta=
\begin{bmatrix}\malpha&\mbeta\\\mgamma&\mdelta\end{bmatrix}.
\end{equation}
\end{prop}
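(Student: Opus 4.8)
The plan is to exhibit $(\AM,\PM)$ directly as the universal object described in the Introduction and then read off the semigroup structure by computation. The starting point is that $M_2$ is generated by the single element $\bn$ with relations $\bn^2=0$ and $\bn\bn^*+\bn^*\bn=\I$, so a morphism $\Psi_D\in\Mor(M_2,M_2\tens D)$ is completely determined by the one element $X=\Psi_D(\bn)\in M_2\tens D$, and conversely any $X$ satisfying $X^2=0$ and $XX^*+X^*X=\I\tens\I$ extends uniquely to such a morphism. First I would identify $M_2\tens D$ with $2\times2$ matrices over $D$ and write $X=\begin{bmatrix}a&b\\c&d\end{bmatrix}$, so that $X^*=\begin{bmatrix}a^*&c^*\\b^*&d^*\end{bmatrix}$, and expand the two relations. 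A direct multiplication shows that $X^2=0$ is equivalent to the four relations in the right-hand column of \eqref{nstnkw} (with $a,b,c,d$ in place of $\malpha,\mbeta,\mgamma,\mdelta$), while $XX^*+X^*X=\I$ is equivalent to the four relations in the left-hand column, the $(2,1)$ matrix entry being the adjoint of the $(1,2)$ entry and hence redundant.

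With this dictionary the universal property is immediate. Since each of the first and third relations of \eqref{nstnkw} expresses $\I$ as a sum of positive terms, every generator is forced to be a contraction, so the universal $\cst$-algebra $\AM$ on $\malpha,\mbeta,\mgamma,\mdelta$ subject to \eqref{nstnkw} is well defined; setting $\PM(\bn)=\begin{bmatrix}\malpha&\mbeta\\\mgamma&\mdelta\end{bmatrix}$ gives a legitimate morphism by the computation above. Given any $\Psi_D$ with $\Psi_D(\bn)=\begin{bmatrix}a&b\\c&d\end{bmatrix}$, the tuple $(a,b,c,d)$ satisfies \eqref{nstnkw}, so by universality of $\AM$ there is a unique $\Lambda\in\Mor(\AM,D)$ sending $\malpha,\mbeta,\mgamma,\mdelta$ to $a,b,c,d$. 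Then $(\id_{M_2}\tens\Lambda)\comp\PM$ and $\Psi_D$ agree on $\bn$, hence everywhere, and $\Lambda$ is unique because its values on the generators are forced. This identifies $(\AM,\PM)$ with the quantum family of all maps $\qs(M_2)\to\qs(M_2)$, whence by \cite{qs} it carries a canonical compact quantum semigroup structure with unit; it remains only to compute $\DM$ and $\eM$.

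For the comultiplication I would use its defining property $(\id_{M_2}\tens\DM)\comp\PM=(\PM\tens\id_{\AM})\comp\PM$. Applying the right-hand side to $\bn$ and using $\PM(\bn\bn^*)=\PM(\bn)\PM(\bn)^*$ and $\PM(\bn^*\bn)=\PM(\bn)^*\PM(\bn)$, I would expand $(\PM\tens\id_{\AM})\bigl(\PM(\bn)\bigr)$ as a $2\times2$ matrix over $\AM\tens\AM$, where the middle leg carries the entries coming from $\PM$ applied to the matrix units and the last leg carries the original generators $\malpha,\mbeta,\mgamma,\mdelta$. Reading off its four entries and comparing with $\begin{bmatrix}\DM(\malpha)&\DM(\mbeta)\\\DM(\mgamma)&\DM(\mdelta)\end{bmatrix}$, obtained by applying $\id_{M_2}\tens\DM$ to the left-hand side, yields formulas \eqref{BigDelta}. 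The counit is the label of the identity map: it is the unique $\eM\in\Mor(\AM,\CC)$ with $(\id_{M_2}\tens\eM)\comp\PM=\id_{M_2}$, so $\PM(\bn)$ must reduce to $\bn=\begin{bmatrix}0&1\\0&0\end{bmatrix}$, giving $\eM(\malpha)=\eM(\mgamma)=\eM(\mdelta)=0$ and $\eM(\mbeta)=1$.

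The conceptual content is light, since existence of the universal object and of the semigroup structure is supplied by the general theory of \cite{qs}; the real work is bookkeeping. I expect the main obstacle to be the explicit expansion in the comultiplication step: each of the four matrix entries of $(\PM\tens\id_{\AM})\bigl(\PM(\bn)\bigr)$ collects six terms, and one must track carefully which generator of the second tensor leg each term is paired with in order to match \eqref{BigDelta} term by term. This is routine but error-prone matrix algebra rather than a genuine difficulty.
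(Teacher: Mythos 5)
Your proposal is correct: the identification of morphisms out of $M_2$ with single elements $X$ satisfying $X^2=0$, $XX^*+X^*X=\I$, the matrix expansion yielding exactly \eqref{nstnkw} (with the $(2,1)$ entry redundant), and the determination of $\DM$ and $\eM$ from their defining properties all check out. The paper itself omits the proof, deferring to \cite[Proposition 4.1]{kom}, and your direct verification via the universal presentation of $M_2$ by the generator $\bn$ is precisely the ``quite straightforward'' argument that reference carries out, so there is nothing to add.
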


The proof of proposition \ref{QMM2} is quite straightforward and can be found in \cite[Proposition 4.1]{kom}.

\subsection{Quantum semigroup preserving Powers states on $M_2$}\label{gA}

Choose $q\in]0,1[$ and let $\om$ be the state on $M_2$ given by
\begin{equation}\label{om}
\om\left(\begin{bmatrix}m_{1,1}&m_{1,2}\\m_{2,1}&m_{2,2}\end{bmatrix}\right)
=\tfrac{1}{1+q^2}(m_{1,1}+q^2m_{2,2}).
\end{equation}

In proposition \ref{Des} we give a detailed description of the quantum semigroup $\QMap^\om\bigl(\qs(M_2)\bigr)$. We shall denote the $\cst$-algebra with comultiplication corresponding to this quantum semigroup by $(\gA,\bDelta)$. The action of $\QMap^\om\bigl(\qs(M_2)\bigr)$ on $\qs(M_2)$ will be described by $\bPhi\in\Mor(M_2,M_2\tens\gA)$ and the counit of $(\gA,\bDelta)$ will be denoted by $\bepsilon$.

\begin{prop}\label{Des}
Let $\QMap^\om\bigl(\qs(M_2)\bigr)=(\gA,\bDelta)$. Then $\gA$ is the universal $\cst$-algebra generated by three elements $\beta$, $\gamma$ and $\delta$ satisfying
\begin{equation}\label{albe1}
\begin{aligned}
q^4\delta^*\delta+\gamma^*\gamma+q^4\delta\delta^*+\beta\beta^*&=\I,
&\beta\gamma&=-q^4\delta^2,\\
\beta^*\beta+\delta^*\delta+\gamma\gamma^*+\delta\delta^*&=\I,
&\gamma\beta&=-\delta^2,\\
\gamma^*\delta-q^2\delta^*\beta+\beta\delta^*-q^2\delta\gamma^*&=0,
&\beta\delta&=q^2\delta\beta,\\
\delta\gamma&=q^2\gamma\delta
\end{aligned}
\end{equation}
and
\begin{equation}\label{albe2}
\begin{split}
q^4\delta\delta^*+\beta\beta^*+q^2\gamma\gamma^*+q^2\delta\delta^*&=\I,\\
q^4\delta^*\delta+\gamma^*\gamma+q^2\beta^*\beta+q^2\delta^*\delta&=q^2\I.
\end{split}
\end{equation}
The comultiplication $\bDelta\in\Mor(\gA,\gA\tens\gA)$ is
\begin{equation}\label{Delta}
\begin{split}
\bDelta(\beta)&=q^4\delta\gamma^*\tens\delta-q^2\beta\delta^*\tens\delta+\beta\tens\beta
+\gamma^*\tens\gamma-q^2\delta^*\beta\tens\delta+\gamma^*\delta\tens\delta,\\
\bDelta(\gamma)&=q^4\gamma\delta^*\tens\delta-q^2\delta\beta^*\tens\delta+\gamma\tens\beta
+\beta^*\tens\gamma-q^2\beta^*\delta\tens\delta+\delta^*\gamma\tens\delta,\\
\bDelta(\delta)&=-q^2\gamma^*\gamma\tens\delta-q^2\delta\delta^*\tens\delta+\delta\tens\beta
+\delta^*\tens\gamma+\beta^*\beta\tens\delta+\delta^*\delta\tens\delta.
\end{split}
\end{equation}
The action of $\QMap^\om\bigl(\qs(M_2)\bigr)$ on $\qs(M_2)$ is given by $\bPhi\in\Mor(M_2,M_2\tens\gA)$ defined by
\begin{equation}\label{Phi}
\bPhi(\bn)=\begin{bmatrix}-q^2\delta&\beta\\\gamma&\delta\end{bmatrix}.
\end{equation}
The counit $\bepsilon$ maps $\gamma$ and $\delta$ to $0$ and $\beta$ to $1$.
\end{prop}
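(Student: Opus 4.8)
The plan is to realise $(\gA,\bDelta)$ as a quotient of the all-maps semigroup $(\AM,\DM)$ of Proposition \ref{QMM2}, the quotient being cut out precisely by the requirement that the action preserve $\om$. Recall from \cite[Theorem 5.4]{qs} that the universal $\om$-preserving family $\bPhi$ exists, and that $\PM$ is universal among \emph{all} quantum families of maps $\qs(M_2)\to\qs(M_2)$. Since $\bPhi$ is in particular such a family, the universal property of $\PM$ yields a unique $\pi\in\Mor(\AM,\gA)$ with $(\id_{M_2}\tens\pi)\comp\PM=\bPhi$; comparing \eqref{Phibaza} with \eqref{Phi} it sends $\mbeta,\mgamma,\mdelta$ to $\beta,\gamma,\delta$ and $\malpha$ to $-q^2\delta$, whence $\pi$ is surjective and $\gA$ is a quotient of $\AM$.

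First I would compute the state-preservation conditions explicitly. Since $(\om\tens\id)\PM(m)=\om(m)\I$ is linear in $m$, it suffices to impose it on the four matrix units $\bn\bn^*,\bn,\bn^*,\bn^*\bn$ spanning $M_2$. Using \eqref{Phibaza} together with $\om(\bn\bn^*)=\tfrac{1}{1+q^2}$, $\om(\bn^*\bn)=\tfrac{q^2}{1+q^2}$ and $\om(\bn)=\om(\bn^*)=0$, the generator $\bn$ forces $\malpha+q^2\mdelta=0$, i.e.\ $\malpha=-q^2\mdelta$ (and $\bn^*$ gives the adjoint relation). Expanding $\PM(\bn\bn^*)=\PM(\bn)\PM(\bn)^*$ and $\PM(\bn^*\bn)=\PM(\bn)^*\PM(\bn)$ as $2\times2$ matrices over $\AM$ and applying $\om\tens\id$ to their diagonal entries produces, after eliminating $\malpha$, exactly the two relations of \eqref{albe2}.

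Next I would check that eliminating $\malpha=-q^2\delta$ and writing $\beta=\mbeta$, $\gamma=\mgamma$, $\delta=\mdelta$ turns the seven relations \eqref{nstnkw} into the corresponding relations of \eqref{albe1}; this is a direct substitution. Together with \eqref{albe2} this exhibits $\gA$ as a quotient of the universal $\cst$-algebra $\gothA$ presented by $\beta,\gamma,\delta$ subject to \eqref{albe1}--\eqref{albe2}, via a surjection $\sigma\in\Mor(\gothA,\gA)$. For the reverse direction I would construct $\rho\in\Mor(\AM,\gothA)$ determined by $\malpha\mapsto-q^2\delta$, $\mbeta\mapsto\beta$, $\mgamma\mapsto\gamma$, $\mdelta\mapsto\delta$ — well defined because \eqref{nstnkw} then become consequences of \eqref{albe1} in $\gothA$ — set $\bPhi_0:=(\id_{M_2}\tens\rho)\comp\PM$, and verify that $\bPhi_0$ preserves $\om$, which holds precisely because \eqref{albe2} is imposed in $\gothA$. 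The universal property of $\bPhi$ then supplies $\Theta\in\Mor(\gA,\gothA)$ with $(\id_{M_2}\tens\Theta)\comp\bPhi=\bPhi_0$, and $\sigma$, $\Theta$ are mutually inverse on generators, so $\gA\cong\gothA$ has the stated presentation.

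Finally, the comultiplication, counit and action descend from $\QMap\bigl(\qs(M_2)\bigr)$: because $\pi$ is a quantum-semigroup quotient one has $(\pi\tens\pi)\comp\DM=\bDelta\comp\pi$, so \eqref{Delta} is obtained by applying $\pi\tens\pi$ to \eqref{BigDelta} and substituting $\malpha=-q^2\delta$, $\malpha^*=-q^2\delta^*$; likewise $\bepsilon=\eM\comp\pi$ recovers the stated counit and $\bPhi=(\id_{M_2}\tens\pi)\comp\PM$ recovers \eqref{Phi}. The one genuinely delicate point is matching the two universal properties in both directions — that is, proving no relations beyond \eqref{albe1}--\eqref{albe2} are forced — for which the auxiliary algebra $\gothA$ and the explicit family $\bPhi_0$ do the work; everything else is bookkeeping of the substitution $\malpha=-q^2\delta$.
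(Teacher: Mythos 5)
Your proposal is correct and takes essentially the same approach as the paper: exhibit $\gA$ as the quotient of $\AM$ by the $\om$-preservation conditions, eliminate $\malpha=-q^2\mdelta$ to obtain \eqref{albe1}--\eqref{albe2}, descend $\bDelta$, $\bepsilon$ and $\bPhi$ along the quotient map, and prove universality by equipping the abstract $\cst$-algebra presented by \eqref{albe1}--\eqref{albe2} with its own $\om$-preserving family and invoking the universal property of $\QMap^\om\bigl(\qs(M_2)\bigr)$ in the opposite direction. The differences are cosmetic: the paper cites \cite[Theorem 5.4]{qs} directly for the description of $\gA$ as the quotient of $\AM$ by the ideal generated by the state-preservation conditions (which also settles the surjectivity you assert), and it defines the $\om$-preserving family on the abstract algebra directly by the matrix \eqref{Phi} rather than through your auxiliary morphism $\rho$.
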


\begin{proof}
We know from \cite[Theorem 5.4]{qs} that $\gA$ is a quotient of $\AM$ by the ideal generated by the set
\begin{equation*}
\bigl\{(\om\tens\id_{\AM})\PM(m)-\om(m)\I\st{m\in{M_2}}\bigr\}.
\end{equation*}
Let $\alpha,\beta,\gamma$ and $\delta$ be images of $\malpha,\mbeta,\mgamma$ and $\mdelta$ under the quotient map $\pi:\AM\to\gA$. Clearly $\alpha,\beta,\gamma,\delta$ generate $\gA$. We have 
\begin{equation}\label{pimor}
\bPhi=(\id_{M_2}\tens\pi)\comp\PM,
\end{equation}
so 
\begin{subequations}
\begin{align}
(\om\tens\id_{\gA})\bPhi(\bn\bn^*)&=\tfrac{1}{1+q^2}
(\alpha\alpha^*+\beta\beta^*+q^2\gamma\gamma^*+q^2\delta\delta^*),\label{on1}\\
(\om\tens\id_{\gA})\bPhi(\bn)&=\tfrac{1}{1+q^2}(\alpha+q^2\delta),\nonumber\\
(\om\tens\id_{\gA})\bPhi(\bn^*)&=\tfrac{1}{1+q^2}(\alpha^*+q^2\delta^*),\nonumber\\
(\om\tens\id_{\gA})\bPhi(\bn^*\bn)&=\tfrac{1}{1+q^2}
(\alpha^*\alpha+\gamma^*\gamma+q^2\beta^*\beta+q^2\delta^*\delta).\label{on2}
\end{align}
\end{subequations}
On the other hand
\begin{equation}\label{on3}
\begin{split}
\om(\bn\bn^*)\I&=\tfrac{1}{1+q^2}\I,\\
\om(\bn^*\bn)\I&=\tfrac{q^2}{1+q^2}\I
\end{split}
\end{equation}
and $\om(\bn)\I=\om(\bn^*)\I=0$. It follows from \cite[Theorem 5.4(1)]{qs} that $(\om\tens\id)\bPhi(m)=\om(m)\I$ for all $m\in{M_2}$. In particular we must have $\alpha=-q^2\delta$ and taking into account the relations \eqref{nstnkw} we see that the generators $\beta,\gamma,\delta$ must satisfy \eqref{albe1}. Relations \eqref{albe2} also follow by considering \eqref{on1}, \eqref{on2} and \eqref{on3}.

By \cite[Theorem 5.4(5)]{qs} the quotient map $\pi$ is a quantum semigroup morphism, so formulas \eqref{Delta} follow directly form \eqref{BigDelta}. Similarly we determine the values of $\bepsilon$. Finally \eqref{Phi} is a consequence of \eqref{Phibaza} and \eqref{pimor}.

So far we know that $\gA$ is a $\cst$-algebra generated by $\beta,\gamma$ and $\delta$ with relations \eqref{albe1} and \eqref{albe2}. To see that it is \emph{the} universal $\cst$-algebra for these relations we use the universal property of $\QMap^\om\bigl(\qs(M_2)\bigr)$. One can easily show that the universal $\cst$-algebra $\widetilde{\gA}$ generated $\beta,\gamma$ and $\delta$ with relations \eqref{albe1} and \eqref{albe2} does admit a morphism $\bPhi$ as defined by \eqref{Phi}. Moreover, since $\{\bn\bn^*,\bn,\bn^*,\bn^*\bn\}$ is a basis for $M_2$, we see that the quantum family of maps $\bPhi\in\Mor\bigl(M_2,M_2\tens\widetilde{\gA}\bigr)$ preserves the state $\om$. It follows that $\gA=\widetilde{\gA}$.
\end{proof}

The symbols 
\begin{equation*}
\gA,\quad\bDelta,\quad\bPhi
\end{equation*}
will from now on be reserved exclusively to denote the objects describing the quantum semigroup structure of $\QMap^\om\bigl(\qs(M_2)\bigr)$ and its action on $\qs(M_2)$. In Section \ref{q10} we will use them also for the case of $q=1$ and $q=0$.

The universal property of $\QMap^\om\bigl(\qs(M_2)\bigr)$ guarantees that for any $\cst$-algebra $\gB$ and any quantum family $\Psi_{\gB}\in\Mor(M_2,M_2\tens\gB)$ preserving the state $\om$ on $M_2$ there exists a unique map $\Lambda\in\Mor(\gA,\gB)$ such that $\Psi_{\gB}=(\id_{M_2}\tens\Lambda)\comp\bPhi$. By \cite[Theorem 5.4(6)]{qs} if $\gB$ admits a comultiplication $\Delta_{\gB}$ such that $\Psi_{\gB}$ satisfies $(\Psi_{\gB}\tens\id_{\gB})\comp\Psi_{\gB}=(\id_{M_2}\tens\Delta_{\gB})\comp\Psi_{\gB}$ then $\Lambda$ is a quantum semigroup morphism, (in fact one need not even demand that $\Delta_{\gB}$ be coassociative, cf.~\cite[Proposition 4.7]{qs}).

\section{Podle\'s quantum $\SO(3)$ groups}\label{PSO3}

In \cite[Remark 3]{spheres} and \cite[Section 3]{podles} Piotr Podle\'s introduced quantum groups $\SqO$ as quotient groups of $\SqU$ for $q\in]-1,1[\setminus\{0\}$. In \cite[Proposition 3.1]{podles} we find the following description of the $\cst$-algebra $\C\bigl(\SqO\bigr)$: 

\begin{prop}[Podle\'s]
The $\cst$-algebra $\C\bigl(\SqO\bigr)$ is the universal $\cst$-algebra generated by five elements
$A,\;C,\;G,\;K,\;L$ satisfying
\begin{center}
\begin{subequations}\label{Prel}
\begin{minipage}{6cm}
\begin{align}
L^*L&=(\I-K)(\I-q^{-2}K),\label{P1}\\
LL^*&=(\I-q^2K)(\I-q^4K),\label{P2}\\
G^*G&=GG^*,\label{P3}\\
K^2&=G^*G,\label{P4}\\
A^*A&=K-K^2,\label{P5}\\
AA^*&=q^2K-q^4K^2,\label{P6}\\
C^*C&=K-K^2,\label{P7}\\
CC^*&=q^2K-q^4K^2,\label{P8}\\
LK&=q^4KL,\label{P9}\\
GK&=KG,\label{P10}
\end{align}
\end{minipage}
\begin{minipage}{6cm}
\begin{align}
AK&=q^2KA,\label{P11}\\
CK&=q^2KC,\label{P12}\\
LG&=q^4GL,\label{P13}\\
LA&=q^2AL,\label{P14}\\
AG&=q^2GA,\label{P15}\\
AC&=CA,\label{P16}\\
LG^*&=q^4G^*L,\label{P17}\\
A^2&=q^{-1}LG,\label{P18}\\
A^*L&=q^{-1}(\I-K)C,\label{P19}\\
K^*&=K.\label{P20}
\end{align}
\end{minipage}
\end{subequations}
\end{center}
\end{prop}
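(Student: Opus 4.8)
The plan is to realise $\C\bigl(\SqO\bigr)$ concretely inside $\C\bigl(\SqU\bigr)$ and then to show that the relations \eqref{P1}--\eqref{P20} characterise exactly this subalgebra. Recall that $\C\bigl(\SqU\bigr)$ is generated by $a,c$ with $c$ normal, $a^*a+c^*c=\I$, $aa^*+q^2cc^*=\I$, $ac=qca$ and $ac^*=qc^*a$, and that Podle\'s defines $\SqO$ as the quotient of $\SqU$ by a central copy of $\ZZ_2$. Concretely this means that $\C\bigl(\SqO\bigr)$ is the fixed-point subalgebra of $\C\bigl(\SqU\bigr)$ for the automorphism $\sigma$ determined by $a\mapsto-a$, $c\mapsto-c$; equivalently it is the closed linear span of the \emph{even} monomials in $a,c,a^*,c^*$.

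First I would set
\[
K=c^*c,\qquad G=c^2,\qquad A=ac,\qquad C=ac^*,\qquad L=a^2,
\]
all of which are $\sigma$-invariant, and verify by direct computation in $\C\bigl(\SqU\bigr)$ that they satisfy each of \eqref{P1}--\eqref{P20}: normality of $c$ gives \eqref{P3}, \eqref{P4} and \eqref{P10}, the identities $a^*a=\I-K$ and $aa^*=\I-q^2K$ give \eqref{P1}--\eqref{P2} and \eqref{P5}--\eqref{P8}, and the $q$-commutation of $a$ with $c,c^*$ (which yields $aK=q^2Ka$) gives \eqref{P9}--\eqref{P17}, while \eqref{P18}--\eqref{P19} are short rearrangements. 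By the universal property this produces a unital $*$-homomorphism $\psi$ from the universal $\cst$-algebra $\cA$ of the relations \eqref{P1}--\eqref{P20} into $\C\bigl(\SqU\bigr)$, whose range lies in $\C\bigl(\SqO\bigr)$. That $\psi$ maps \emph{onto} $\C\bigl(\SqO\bigr)$ is then easy: using $ac=qca$ and $ac^*=qc^*a$ one checks that every normally ordered degree-two monomial ($a^2,c^2,ac,ac^*,a^*c,a^*c^*,c^*c,a^*a,aa^*,\dots$) is a scalar multiple of one of $A,C,G,K,L$, their adjoints, or $\I$; hence the $*$-algebra they generate contains every even monomial and is dense in $\C\bigl(\SqO\bigr)$.

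The decisive step is injectivity of $\psi$, which I would obtain by classifying all irreducible $*$-representations of $\cA$ directly from \eqref{P1}--\eqref{P20} and matching them with those of $\C\bigl(\SqO\bigr)$. The relations force a rigid spectral picture: $K=K^*$ with $A^*A=C^*C=K-K^2\geq0$ confines $\spec K$ to $[0,1]$, positivity of $L^*L=(\I-K)(\I-q^{-2}K)$ excludes the interval $(q^2,1)$, and the $q$-commutation relations \eqref{P11}--\eqref{P12} make $A$ and $C$ intertwine the eigenspaces of $K$ for eigenvalues $\lambda$ and $q^{\pm2}\lambda$; together these collapse $\spec K$ onto the geometric ladder $\{q^{2n}:n\geq0\}\cup\{0\}$. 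Consequently every irreducible representation of $\cA$ is either one of a circle $\TT$ of one-dimensional representations (where $K=A=C=G=0$ and $L$ is a phase) or a member of an infinite-dimensional series on $\ell^2(\NN)$, and this is precisely the list obtained by restricting the irreducible representations of $\C\bigl(\SqU\bigr)$ to the even subalgebra and composing with $\psi$. Thus every irreducible representation of $\cA$ factors through $\psi$, so $\|\psi(x)\|=\|x\|$ for all $x\in\cA$; combined with the surjectivity above this shows that $\psi$ is an isometric isomorphism $\cA\cong\C\bigl(\SqO\bigr)$.

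I expect this representation-theoretic matching to be the main obstacle: the content of the argument is that \eqref{P1}--\eqref{P20} admit \emph{no} representations beyond those coming from $\C\bigl(\SqO\bigr)$, and the delicate points are the spectral analysis pinning $\spec K$ to the ladder $\{q^{2n}\}$ (ruling out continuous spectrum and controlling the behaviour near $0$ and at the top value $1$) and the verification that, once $A,C,K$ are fixed in a representation, the phase relations \eqref{P18}--\eqref{P19} together with $G$ and $L$ leave no additional freedom. The bookkeeping of the commutation relations is lightened by Theorem \ref{FPR}: since $G$ is normal, $LG=q^4GL$ in \eqref{P13} yields $LG^*=q^4G^*L$ in \eqref{P17} automatically, so several of the relations need not be imposed or checked independently.
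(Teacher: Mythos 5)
The first thing to note is that the paper does not prove this proposition at all: it is imported verbatim from Podle\'s (\cite[Proposition 3.1]{podles}), so there is no in-paper argument to compare yours against --- you have written a proof plan for a statement the paper treats as an input. Judged on its own merits, your plan follows what is essentially the original route. The identification of $\C\bigl(\SqO\bigr)$ with the fixed-point algebra of the $\ZZ_2$-automorphism $a\mapsto-a$, $c\mapsto-c$ of $\C\bigl(\SqU\bigr)$ is correct, and so is your choice of generators $K=c^*c$, $G=c^2$, $A=ac$, $C=ac^*$, $L=a^2$: all twenty relations do hold for these elements (e.g.\ $ac^*c=q^2c^*ca$ gives \eqref{P9} and \eqref{P11}, and together with $ac=qca$ gives \eqref{P13} and \eqref{P18}). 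The surjectivity argument is also fine --- every quadratic monomial in $a,c,a^*,c^*$ lies in the $*$-algebra generated by $A,C,G,K,L$ and $\I$ (minor quibble: $a^*a=\I-K$ and $aa^*=\I-q^2K$ are affine, not scalar, combinations). Your closing observation that \eqref{P17} is automatic from \eqref{P3} and \eqref{P13} via Theorem \ref{FPR} agrees exactly with what the paper notes in Subsection \ref{redu}.

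The reservation is that the decisive step --- injectivity of $\psi$, i.e.\ that the relations admit no representations beyond those coming from $\C\bigl(\SqO\bigr)$ --- is left as a sketch, and that is where the entire content of the proposition lives. The spectral skeleton you describe is right: $K=K^*$ and $A^*A=K-K^2\geq0$ give $\spec K\subset[0,1]$; positivity of $L^*L=(\I-K)(\I-q^{-2}K)$ excludes $]q^2,1[$; and the $q$-commutation \eqref{P11} propagates this to $\spec K\subset\{q^{2n}:n\geq0\}\cup\{0\}$ --- compare the proof of Proposition \ref{CK} in the paper, which runs exactly this kind of ladder argument. But your matching step is phrased in a way that borders on circular (``this is precisely the list obtained by restricting the irreducible representations of $\C\bigl(\SqU\bigr)$''): the list must be \emph{derived} from the relations alone before it can be compared with anything. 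Concretely, one must show that in an irreducible representation with $K\neq0$ the top of the ladder is the eigenvalue $1$ (if the largest spectral value were $q^{2n_0}<1$, then $A$ would have to annihilate the corresponding spectral subspace, contradicting $A^*A=K-K^2\neq0$ there); that irreducibility forces one-dimensional eigenspaces with $A$ a weighted shift; that $G$ and $L$ are then pinned down by \eqref{P4}, \eqref{P18}, \eqref{P1} up to a single phase matching the known circle of infinite-dimensional representations; and that $C$ is determined by \eqref{P19}. Similarly, the claim that the one-dimensional representations form exactly the circle $K=A=C=G=0$, $L\in\TT$ needs the small argument that $K=1$ is impossible in dimension one (it contradicts $LL^*=(\I-q^2K)(\I-q^4K)$ versus $LK=q^4KL$). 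None of this would fail, and your strategy would carry it, but it is a genuine chunk of work rather than bookkeeping --- it is the proof.
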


\begin{rem}\label{remPod}
\noindent
\begin{enumerate}
\item\label{remPod1} Using \cite[Lemma 3.2]{podles} one can show that the generators $A,C,G,K,L$ of $\C\bigl(\SqO\bigr)$ satisfy
\begin{equation*}
K=A^*A+G^*G\quad\text{and}\quad{C=q^{-1}LA^*+q^2AG^*}.
\end{equation*}
Therefore one can express every relation from the list \eqref{Prel} using only $A,G$ and $L$.
\item Quantum groups $\SqU$ are defined for the deformation parameter $q\in]-1,1[\setminus\{0\}$. Thus the procedure of taking a quotient by a $\ZZ_2$ action yields $\SqO$ also for negative values of $q$. However, in \cite[Proposition 3.3]{podles} Podle\'s shows that the quantum $\SO(3)$ groups defined for negative deformation parameters are isomorphic to those for positive $q$ (namely $\mathrm{S}_{-q}\mathrm{O}(3)\cong\SqO$). Therefore it is enough to consider only positive $q$ and we will do so in what follows.
\end{enumerate}
\end{rem}

The comultiplication $\Delta_q$ on $\C\bigl(\SqO\bigr)$ acts on generators in the following way:
\begin{equation*}
\begin{split}
\Delta_q(A)&=(\I-q^2K)\tens{A}+A\tens{L}-qA^*\tens{G}-K\tens{A},\\
\Delta_q(C)&=-q^2C\tens{K}+L\tens{C}-qG^*\tens{C^*}+C\tens(\I-K),\\
\Delta_q(G)&=C^*\tens{A}+G\tens{L}-q^{-1}L^*\tens{G}+q^{-2}C^*\tens{A},\\
\Delta_q(K)&=K\tens(\I-q^2K)+q^{-1}A\tens{C}+q^{-1}A^*\tens{C^*}+(\I-K)\tens{K},\\
\Delta_q(L)&=-qC\tens{A}+L\tens{L}+q^2G^*\tens{G}-q^{-1}C\tens{A}.
\end{split}
\end{equation*}

\subsection{Reduction of Podle\'s relations}\label{redu}

The relations \eqref{Prel} contain some superfluous ones. Clearly \eqref{P20} follows immediately from \eqref{P4} and \eqref{P5}, as $K=A^*A+G^*G$. Moreover since $G$ is normal (i.e.~by \eqref{P3}), we have $[G,G^*G]=0$ and by \eqref{P15} and Theorem \ref{FPR}
\begin{equation*}
\begin{split}
AG&=q^2GA,\\
AG^*&=q^2G^*A
\end{split}
\end{equation*}
which can be rewritten as
\begin{equation*}
\begin{split}
GA^*&=q^2A^*G,\\
GA&=q^{-2}AG
\end{split}
\end{equation*}
so that $[G,A^*A]=0$ which implies \eqref{P10}. Finally \eqref{P17} follows from \eqref{P13} and \eqref{P3} by Theorem \ref{FPR}.

\begin{prop}\label{CK}
Let $H$ be a Hilbert space and let $A,C,K\in\B(H)$ be such that $0\leq{K}\leq\I$ and
\begin{subequations}\label{AACCK}
\begin{align}
A^*A=C^*C&=K-K^2,\label{AACCK1}\\
AA^*=CC^*&=q^2K-q^4K^2,\\
AK&=q^2KA,\label{AKq}\\
AC&=CA.\label{przem}
\end{align}
\end{subequations}
Then
\begin{equation}\label{wynik}
CK=q^2KC.
\end{equation}
\end{prop}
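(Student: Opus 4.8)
The plan is to show that the single element
\[
X := CK - q^2 KC
\]
vanishes, and the whole argument turns on producing two commutation identities for $X$ and then playing them against the positivity $0\le K\le\I$. The first identity costs nothing: since for any operator $C$ one has $C(C^*C)=(CC^*)C$, substituting the hypotheses $C^*C=K-K^2$ and $CC^*=q^2K-q^4K^2$ from \eqref{AACCK} and expanding gives
\[
X(\I-K)=q^2 KX,\qquad\text{i.e.}\qquad KX=X\,\tfrac{\I-K}{q^2}.
\]
The second identity uses the genuine hypotheses $AC=CA$ \eqref{przem} and $AK=q^2KA$ \eqref{AKq}: a short computation ($ACK=CAK=q^2CKA$ and $q^2AKC=q^4KCA$) yields
\[
AX=q^2 XA.
\]
Both are one-line manipulations; the content lies in how they interact.

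The main step, which I expect to be the crux, is to read these two identities as intertwiners and notice a spectral-shift mismatch. The first identity says $X$ carries $K$ to $\tfrac{\I-K}{q^2}$, while \eqref{AKq} gives $KA=A\,(q^{-2}K)$, so that $A$ carries $K$ to $q^{-2}K$. Composing, one computes
\[
K(AX)=(AX)\,\tfrac{\I-K}{q^4},\qquad
K(XA)=(XA)\,\tfrac{\I-q^{-2}K}{q^2}.
\]
Because $AX=q^2XA$, the operator $AX$ must intertwine $K$ with \emph{both} affine functions at once, so it is annihilated by their difference
\[
\tfrac{\I-K}{q^4}-\tfrac{\I-q^{-2}K}{q^2}=q^{-2}(q^{-2}-1)\,\I,
\]
which is a \emph{nonzero scalar} precisely because $q<1$. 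Hence $AX=0$, and then $XA=0$ by the second identity. This is exactly the place where the hypothesis $q\neq1$ is used in an essential way, and where one must spot that the two ways of pushing $K$ through $AX$ disagree by a nonzero constant rather than coinciding.

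Finally I would invoke positivity to finish. Since $AA^*=q^2K-q^4K^2=q^2K(\I-q^2K)$ with $\I-q^2K\ge(1-q^2)\I$ invertible (again using $q<1$), the support projection of $AA^*$ equals that of $K$, so $\Bar{\Ran A}=(\I-E)H$ where $E$ is the spectral projection of $K$ onto $\{0\}$. From $XA=0$ the operator $X$ annihilates $\Bar{\Ran A}$, i.e. $X=XE$. Substituting $X=XE$ into the first identity and using $KE=0$ collapses it to $KX=q^{-2}X$; but $0\le K\le\I$ forces $\|KX\|\le\|X\|$, whereas $KX=q^{-2}X$ gives $\|KX\|=q^{-2}\|X\|>\|X\|$ unless $X=0$. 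Therefore $X=0$, which is the desired relation $CK=q^2KC$. Notably this route never needs the Fuglede--Putnam--Rosenbloom theorem, only the joint use of the two intertwining identities and the spectral bound on $K$.
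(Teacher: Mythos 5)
Your proof is correct, and it takes a genuinely different route from the paper's. The paper argues spectrally: after restricting to $(\ker{K})^\perp$ it uses the polar decomposition $A=u|A|$ to show that $\spec{K}\subset\{1,q^2,q^4,\ldots\}\cup\{0\}$, decomposes $H$ into eigenspaces of $K$, tracks how $A$ and $C$ shift these eigenspaces (hypothesis \eqref{przem} enters only to exclude a possible component of $C\psi$ in $\cH(K=1-q^{2n-2})$), and concludes via the polar decomposition of $C$. You instead work with the single element $X=CK-q^2KC$ and never touch the spectral structure of $K$ beyond its support: your two intertwining identities $KX=q^{-2}X(\I-K)$ and $AX=q^2XA$ check out, the scalar-mismatch computation is right (in $(AX)\bigl[q^{-4}(\I-K)-q^{-2}(\I-q^{-2}K)\bigr]=0$ the $K$-terms cancel exactly, leaving the nonzero scalar $q^{-4}-q^{-2}$), and the endgame via the support projection of $AA^*=q^2K(\I-q^2K)$ together with the bound $\|KX\|\leq\|X\|$ is sound. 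What your route buys: it is much shorter, essentially algebra plus one norm estimate; it pinpoints exactly where $q\neq1$ is needed (the nonvanishing of $q^{-4}-q^{-2}$), consistent with Remark \ref{CKrems}\eqref{CKrem2} where the statement fails at $q=1$; and it never uses $A^*A=K-K^2$, so it proves a marginally stronger statement. What the paper's longer route buys is structural information invisible in your argument — the quantized spectrum of $K$ and the shift behaviour of $A$ and $C$ — which is precisely what makes part (1) of Remark \ref{CKrems} transparent: the hypothesis \eqref{przem} is genuinely needed only for the exceptional $q$ satisfying $q^{2n_0}=1-q^{2m_0}$, whereas your argument invokes \eqref{przem} for every $q$. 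One small streamlining of your final step: instead of introducing the spectral projection $E$, note that $XA=0$ gives $XAA^*=q^2XK(\I-q^2K)=0$, hence $XK=0$ since $\I-q^2K$ is invertible; then your first identity collapses to $X=q^2KX$, so $\|X\|\leq q^2\|X\|$ and $X=0$.
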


Before proving Proposition \ref{CK} let us introduce a (simplified version of a) very convenient notation used by S.L.~Woronowicz e.g.~in \cite{qexp}. Let $\cH$ be a Hilbert space and let $T$ be selfadjoint operator on $\cH$. For a subset $E\subset\spec{K}$ we write $\cH(T\in{E})$ for the spectral subspace of $T$ corresponding to $E$. We extend this notation further by agreeing to write $\cH(a\leq{T}\leq{b})$ for $E=[a,b]$ and $\cH(T=x)$ if $E=\{x\}$. Similarly we write $\cH(T<r)$ for the spectral subspace corresponding to $]-\infty,r[$, etc.

\begin{proof}[Proof of Proposition \ref{CK}]
Let us note that due to \eqref{AACCK} all operators involved preserve not only $\ker{K}$, but also $(\ker{K})^\perp$. Since on $\ker{K}$ the formula \eqref{wynik} holds, we can immediately restrict to the subspace $\cH=(\ker{K})^\perp$. Note that this means that $\ker{A^*}=\ker{K}=\{0\}$.

Let $A=u|A|$ be the polar decomposition of $A$. Then $u$ is a partial isometry with initial subspace $(\ker{|A|})^\perp=\bigl(\ker(K-\I)\bigr)^\perp$ and final subspace $\Bar{\Ran{A}}=(\ker{A^*})^\perp=\cH$. In other words $u$ is a coisometry.

By the adjoint version of \eqref{AKq} we have $KA^*=q^2A^*K$, so
\begin{equation}\label{cancel}
|A|Ku^*=q^2|A|u^*K
\end{equation}
since $|A|$ commutes with $K$ by \eqref{AACCK1}. Moreover, as noted above, the range of $u^*$ is $(\ker{|A|})^\perp$ and $K$ preserves this subspace. Therefore we may cancel $|A|$ in \eqref{cancel} to obtain
\[
q^{-2}Ku^*=u^*K.
\]
Multiplying this relation from the right by $u$ we obtain $q^{-2}Ku^*u=u^*Ku$ or
\begin{equation}\label{napsi}
q^{-2}K\psi=u^*Ku\psi
\end{equation}
for any $\psi\in(\ker{|A|})^\perp=\bigl(\ker(K-\I)\bigr)^\perp$. Let us note the first consequence of \eqref{napsi}. Namely let us take $\eps>0$ and $\psi\in\cH(q^2+\eps\leq{K}\leq1-\eps)$. Then the left hand side of \eqref{napsi} has norm greater or equal to $q^{-2}(q^2+\eps)\|\psi\|=(1+q^{-2}\eps)\|\psi\|$, while the norm of the right hand side is smaller or equal to $\|\psi\|$. Therefore $\psi$ must be $0$. In other words the spectral projection of $K$ corresponding to the interval $]q^2,1[$ is zero. In other words
\begin{equation}\label{key}
\spec{K}\,\cap\,]q^2,1[=\emptyset.
\end{equation}
Let us denote by $\cK$ the space $\bigl(\ker(K-\I)\bigr)^\perp=\Ran{u^*}$. From \eqref{napsi} we gather that
\begin{equation}\label{spektra}
q^{-2}\bigl.K\bigr|_{\cK}=u^*K\bigl.u\bigr|_{\cK}.
\end{equation}
Since $u$ is unitary from $\cK$ onto $\cH$ the spectrum of the right hand side of \eqref{spektra} is equal to $\spec{K}$. The spectrum of the left hand side of \eqref{spektra} is $q^{-2}\bigl((\spec{K})\setminus\{1\}\bigr)$. Taking into account \eqref{key} we find that
\[
\spec{K}\subset\bigl\{1,q^2,q^4,\ldots\bigr\}\cup\bigl\{0\bigr\}.
\]
Let us decompose $\cH$ into direct sum of eigenspaces of $K$:
\[
\cH=\bigoplus_{n=0}^\infty\cH(K=q^{2n}).
\]
In this decomposition we have
\[
\begin{split}
K&=\begin{bmatrix}\quad\;1\quad\;\\&\quad\:q^2\:\quad\\&&\quad\:q^4\:\quad\\&&&\quad\:q^6\:\quad\\&&&&\ddots\end{bmatrix},\\
C^*C=A^*A&=\begin{bmatrix}\quad\;0\;\quad\\&\,q^2-q^4\\&&\,q^4-q^8\\&&&\,q^6-q^{12}\\&&&&\ddots\end{bmatrix},\\
CC^*=AA^*&=\begin{bmatrix}q^2-q^4\\&q^4-q^8\\&&q^6-q^{12}\\&&&q^8-q^{16}\\&&&&\ddots\end{bmatrix}.
\end{split}
\]
Let us also note that $u$ maps $\cH(K=1)$ to $\{0\}$ and is an isometric map of $\cH(K=q^{2n})$ onto $\cH(K=q^{2n-2})$ for $n>0$. Also, since $|A|$ preserves the spectral subspaces of $K$ and is $0$ on $\cH(K=1)$ and invertible on the remaining ones, we see that $A$ maps $\cH(K=1)$ to $\{0\}$ and 
\begin{equation}\label{Amaps}
A\bigl(\cH(K=q^{2n})\bigr)=\cH(K=q^{2n-2})
\end{equation}
for $n>0$.

Let us take $\psi\in\cH(K=q^{2n})$ for some $n>0$. We have $C^*C\psi=(q^{2n}-q^{4n})\psi$, so
\[
(q^{2n}-q^{4n})C\psi=CC^*C\psi
\]
so $C\psi$ is an eigenvector of $CC^*$ with eigenvalue $q^{2n}-q^{4n}$. In other words
\begin{equation}\label{Cmaps}
C\psi\in\cH(K=q^{2n-2})\oplus\cH(K=1-q^{2n-2})
\end{equation}
We will show that in fact
\begin{equation}\label{To}
C\psi\in\cH(K=q^{2n-2}).
\end{equation}
Let us first notice that for generic $q$ the subspace $\cH(K=1-q^{2n-2})$ is $\{0\}$ (e.g.~for $q$ non algebraic or $q<\tfrac{\sqrt{2}}{2}$). Moreover we can assume that $1-q^{2n-2}\neq{q^{2n-2}}$ because in this particular case we already have \eqref{To} (and \eqref{Cmaps} must be modified accordingly).

Assume that $n>1$. Taking into account \eqref{Amaps} and \eqref{Cmaps} as well as \eqref{przem} we obtain
\[
\begin{array}{c@{\;}c@{\;}l@{\smallskip}}
AC\psi&\in&\cH(K=q^{2n-4})\oplus\cH\bigl(K=q^{-2}(1-q^{2n-2})\bigr)\\
\parallel\\
CA\psi&\in&\cH(K=q^{2n-4})\oplus\cH(K=1-q^{2n-4})
\end{array}
\]
Since $1-q^{2n-4}\neq{q^{-2}-q^{2n-4}}$ we see that $AC\psi\in\cH(K=q^{2n-4})$. Looking again at \eqref{Amaps} and \eqref{Cmaps} gives \eqref{To}.

Finally let us deal with the case $n=1$. In this case we have $\cH(K=1-q^{2n-2})=\cH(K=0)=\{0\}$, so \eqref{To} follows from \eqref{Cmaps}. Let $C=v|C|$ be the polar decomposition of $C$. Since $|C|$ preserves eigenspaces of $K$ we have
\begin{equation}\label{ito}
v\bigl(\cH(K=q^{2n})\bigr)=\cH(K=q^{2n-2})
\end{equation}
for $n>0$.

Using \eqref{ito} we compute for $\psi\in\cH(K=q^{2n})$ with $n>0$:
\[
\begin{split}
KC\psi&=Kv|C|\psi=(q^{2n}-q^{4n})Kv\psi=(q^{2n}-q^{4n})q^{2n-2}v\psi,\\
CK\psi&=v|C|K\psi=q^{2n}v|C|\psi=q^2n(q^{2n}-q^{4n})v\psi
\end{split}
\]
so that $CK=q^2KC$ on $\cH(K<1)$. Clearly $CK\psi=q^2KC\psi=0$ for $\psi\in\cH(K=1)$. This ends the proof of \eqref{wynik}.
\end{proof}

Proposition \ref{CK} shows that relation \eqref{P12} follows from \eqref{P5}--\eqref{P8}, \eqref{P11} and \eqref{P16}.

\begin{rem}\label{CKrems}
\noindent\begin{enumerate}
\item For most values of $q$ Proposition \ref{CK} does not require the assumption \eqref{przem}. This is the case e.g. for $q$ strictly smaller than $\tfrac{\sqrt{2}}{2}$ or for those for which $q^{2n}+q^{2m}$ is not equal to $1$ for all $m,n\in\NN$. In particular this is the case for non algebraic $q$.

However, if there exist $m_0,n_0\in\NN$ such that $q^{2n_0}=1-q^{2m_0}$ and $n_0\neq{m_0}$ we can give an example of operators $A,C,K$ on a Hilbert space $H$ satisfying \eqref{AACCK1}--\eqref{AKq} and failing \eqref{wynik}. Indeed, let $H=\ell^2(\ZZ_+)$ with standard orthonormal basis $(e_n)_{n\in\ZZ_+}$. Let
\[
Ke_n=q^{2n}e_n,\quad{s}e_n=\begin{cases}0&n=0,\\e_{n-1}&n>0,\end{cases}\quad
\sigma{e_n}=
\begin{cases}
e_n&n\neq{n_0,m_0},\\
e_{m_0}&n=n_0,\\
e_{n_0}&n=m_0.
\end{cases}
\]
Now putting $K=s\sqrt{K=K^2}$ and $C=s\sigma\sqrt{K-K^2}$ we obtain the required example.
\item\label{CKrem2} Let us note that Proposition \ref{CK} is not true for $q=1$. Indeed one can take $H=L^2\bigl([0,1]\bigr)$ and $K$ the multiplication by the identity function on $[0,1]$. Let $A=\sqrt{K-K^2}$. Moreover let $u$ be the unitary map on $H$ induced by the flip
\begin{equation*}
[0,1]\ni{t}\longmapsto{1-t}\in[0,1]
\end{equation*}
and let $C=uA$. Then $AK=KA$, but $CK=(\I-K)C\neq{KC}$. Moreover we have $AC=CA$.
\end{enumerate}
\end{rem}

\subsection{Action on $\qs(M_2)$}\label{Act}

Let us fix $q\in]0,1[$. The quantum group $\SqO$ acts on the quantum space $\qs(M_2)$. The action is described by the morphism $\Psi_q\in\Mor\bigl(M_2,M_2\tens\C(\SqO)\bigr)$,
\begin{equation}\label{Psiq}
\Psi_q(\bn)=
\begin{bmatrix}
-qA&L\\-qG&q^{-1}A
\end{bmatrix}.
\end{equation}
This action comes from the action of $\SqU$ on $\qs(M_2)$ induced by the fundamental representation of $\SqU$ and is therefore continuous (\cite[Lemma 2.1]{izumi}).

One can check using relations \eqref{Prel} that the action of $\SqO$ preserves the state $\om$ introduced by \eqref{om}. Therefore there exists a unique map $\Lambda_q\in\Mor\bigl(\gA,\C(\SqO)\bigr)$ such that
\begin{equation*}
\Psi_q=(\id_{M_2}\tens\Lambda_q)\comp\bPhi.
\end{equation*}
In particular we have
\begin{equation}\label{Lamq}
\Lambda_q(\beta)=L,\quad\Lambda_q(\gamma)=-qG,\quad\Lambda_q(\delta)=q^{-1}A.
\end{equation}

The symbols
\[
\C\bigl(\SqO\bigr),\quad\Delta_q,\quad\Psi_q,\quad\Lambda_q
\]
will be used throughout the paper in the meaning introduced above.

\section{Characterization of quantum $\SO(3)$ groups}\label{char}

Let $\cG=(\gB,\Delta_{\gB})$ be a compact quantum group with a given continuous action $\Psi_{\gB}$ on $\qs(M_2)$, i.e.~$\Psi_{\gB}\in\Mor(M_2,M_2\tens\gB)$ and
\begin{equation}\label{COACT}
(\Psi_{\gB}\tens\id_{\gB})\comp\Psi_{\gB}=(\id_{M_2}\tens\Delta_{\gB})\comp\Psi_{\gB}.
\end{equation}
Assume further that the action of $\cG$ preserves the state $\om$. Then using the notation introduced in Subsection \ref{gA} there is a unique $\Lambda\in\Mor(\gA,\gB)$ satisfying
\begin{equation*}
\Psi_{\gB}=(\id_{M_2}\tens\Lambda)\comp\bPhi.
\end{equation*}
Note that this implies that
\begin{equation}\label{to}
\Psi_{\gB}(\bn)=(\id_{M_2}\tens\Lambda)\begin{bmatrix}-q^2\delta&\beta\\\gamma&\delta\end{bmatrix}.
\end{equation}

We will prove that $\Lambda$ factorizes uniquely through $\Lambda_q\in\Mor\bigl(\gA,\C(\SqO)\bigr)$ introduced in Subsection \ref{Act}. Let us denote by $\cB$ the canonical Hopf $*$-algebra dense in $\cB$ and let $\epsilon_{\cB}$ and $\kappa_{\cB}$ be its counit and antipode (\cite[Section 2]{cqg}).

\begin{prop}\label{gladkie}
Let
\begin{equation*}
b=\Lambda(\beta),\quad{c=\Lambda(\gamma),}\quad{d=\Lambda(\delta).}
\end{equation*}
Then
\begin{center}
\begin{subequations}\label{str15}
\begin{minipage}{7cm}
\begin{align}
q^4d^*d+c^*c+q^4dd^*+bb^*&=\I,\label{alef}\\
b^*b+d^*d+cc^*+dd^*&=\I,\label{bet}\\
q^4d^*d+c^*c+q^2b^*b+q^2d^*d&=q^2\I,\label{gimel}\\
q^4dd^*+bb^*+q^2cc^*+q^2dd^*&=\I,\label{dalet}\\
c^*d-q^2d^*b+bd^*-q^2dc^*&=0,\label{he}
\end{align}
\end{minipage}
\begin{minipage}{5cm}
\begin{align}
bc&=-q^4d^2,\label{waw}\\
cb&=-d^2,\nonumber\\
bd&=q^2db,\nonumber\\
dc&=q^2cd.\label{tet}
\end{align}
\end{minipage}
\end{subequations}
\end{center}
Moreover $b,c,d\in\cB$ and we have
\begin{equation}\label{gladkiee}
\epsilon_{\cB}(c)=\epsilon_{\cB}(d)=0\quad\text{and}\quad\epsilon_{\cB}(b)=1.
\end{equation}
\end{prop}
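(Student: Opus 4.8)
The plan is to establish the three assertions in the order listed, and this order is forced: the counit $\epsilon_{\cB}$ is a priori defined only on the dense Hopf $*$-algebra $\cB$, so the membership $b,c,d\in\cB$ must be proved before $\epsilon_{\cB}$ can be applied to them.

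The relations \eqref{str15} are immediate. By Proposition \ref{Des} the generators $\beta,\gamma,\delta$ of $\gA$ satisfy \eqref{albe1} and \eqref{albe2}, and $\Lambda\in\Mor(\gA,\gB)$ is a unital $*$-homomorphism. Applying $\Lambda$ to each of those identities and substituting $b=\Lambda(\beta)$, $c=\Lambda(\gamma)$, $d=\Lambda(\delta)$ reproduces \eqref{alef}--\eqref{tet} verbatim: for instance \eqref{alef} is the image of the first relation in \eqref{albe1}, \eqref{waw} and the line $cb=-d^2$ are the images of the two relations $\beta\gamma=-q^4\delta^2$ and $\gamma\beta=-\delta^2$, while \eqref{gimel} and \eqref{dalet} are the images of the two relations in \eqref{albe2}. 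Nothing beyond matching relations is required here.

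The central step is $b,c,d\in\cB$, and this is where I expect the only genuine difficulty. Here I would invoke the structure theory of continuous (Podle\'s) actions of compact quantum groups (cf.\ \cite{podles,boca}): for such an action of $\cG$ on a unital $\cst$-algebra $M$ there is a dense unital $*$-subalgebra $\cM\subset{M}$ on which $\Psi_{\gB}$ restricts to an algebraic right coaction valued in $\cM\odot\cB$, compatible with the Hopf $*$-structure. Since here $M=M_2$ is finite-dimensional, its only dense subalgebra is $M_2$ itself, whence $\cM=M_2$ and $\Psi_{\gB}(M_2)\subset{M_2}\odot\cB$. Reading off the matrix entries of $\Psi_{\gB}(\bn)$ from \eqref{to}, namely $-q^2 d,b,c,d$, we conclude that all of them, in particular $b,c,d$, lie in $\cB$. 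Equivalently, one may argue through the Peter--Weyl decomposition of $M_2$ into spectral subspaces for the action: finite-dimensionality forces only finitely many finite-dimensional irreducible corepresentations to occur, and all of their matrix coefficients belong to $\cB$. This is the only point at which the Podle\'s condition (continuity of the action) is genuinely used, which is why I regard it as the main obstacle.

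Finally, \eqref{gladkiee} follows from the counit property of the coaction. Once $b,c,d\in\cB$, the restriction of $\Psi_{\gB}$ to $M_2$ is an algebraic coaction of $\cB$ and hence satisfies $(\id_{M_2}\tens\epsilon_{\cB})\comp\Psi_{\gB}=\id_{M_2}$. Applying this to $\bn$ and using the expansion $\Psi_{\gB}(\bn)=\bn\bn^*\tens(-q^2 d)+\bn\tens{b}+\bn^*\tens{c}+\bn^*\bn\tens{d}$ coming from \eqref{to}, the $(1,2)$ entry gives $\epsilon_{\cB}(b)=1$, while the $(2,1)$ and $(2,2)$ entries give $\epsilon_{\cB}(c)=\epsilon_{\cB}(d)=0$; the $(1,1)$ entry $-q^2\epsilon_{\cB}(d)=0$ is then automatically consistent. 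This yields \eqref{gladkiee} and completes the argument.
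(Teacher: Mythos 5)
Your proposal is correct and follows essentially the same route as the paper: the relations \eqref{str15} come from applying the $*$-homomorphism $\Lambda$ to \eqref{albe1}--\eqref{albe2}, the membership $b,c,d\in\cB$ comes from the smooth-subalgebra results for continuous actions (the paper cites Marciniak, Podle\'s, and Podle\'s--M\"uller) together with $\cM=M_2$ by finite-dimensionality, and \eqref{gladkiee} follows from the counit property of the resulting algebraic coaction. The only difference is cosmetic: you spell out the counit computation entry by entry, which the paper compresses into a single ``thus.''
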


\begin{proof}
Relations \eqref{str15} are consequences of relations \eqref{albe1} and \eqref{albe2} satisfied by $\beta$, $\gamma$ and $\delta$. By results of \cite[Theorem 3.6]{marciniak}, \cite[Theorem 1.5]{podles} and \cite[Theorem 6.3]{PodMu} we know that there is a dense ``smooth'' subalgebra $\cM$ of $M_2$ such that $\Psi_{\cB}$ restricted to this subalgebra is a right coaction of the Hopf $*$-algebra $\cB$ on $\cM$. Clearly $\cM=M_2$ and thus $b,c,d\in\cB$ and $\epsilon_{\cB}(c)=\epsilon_{\cB}(d)=0$, $\epsilon_{\cB}(b)=1$.
\end{proof}

We will keep the notation $b,c$ and $d$ for images of $\beta,\gamma$ and $\delta$ under $\Lambda$ throughout the paper. In terms of these elements we have
\begin{equation}\label{PsiB}
\Psi_{\gB}(\bn)=\begin{bmatrix}-q^2d&b\\c&d\end{bmatrix}
\end{equation}
(cf.~\eqref{to}).

\begin{prop}\label{aProp}
Let 
\begin{equation*}
\ba=
\begin{bmatrix}
q^4dd^*+bb^*&-qd&-q^2d^*&q^3d^*d+q^{-1}c^*c\\
qbd^*-q^3dc^*&b&qc^*&c^*d-q^2d^*b\\
db^*-q^2cd^*&q^{-1}c&b^*&q^{-1}d^*c-qb^*d\\
qcc^*+qdd^*&d&qd^*&b^*b+d^*d
\end{bmatrix}.
\end{equation*}
Then the matrix $\ba\in{M_4(\gB)}$ is unitary.
\end{prop}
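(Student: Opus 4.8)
The plan is to realise $\ba$ as the matrix, in an orthonormal basis, of the corepresentation of $\cG$ on the GNS space of the Powers state $\om$, and then to argue abstractly that this corepresentation is unitary. I work with the linear basis $e_1,e_2,e_3,e_4=\bn\bn^*,\bn,\bn^*,\bn^*\bn$ of $M_2$. Writing $\Psi_{\gB}(e_j)=\sum_i e_i\tens v_{ij}$ defines a matrix $v\in M_4(\gB)$; coassociativity \eqref{COACT} forces $\Delta_{\gB}(v_{ij})=\sum_k v_{ik}\tens v_{kj}$, while \eqref{gladkiee} gives $\epsilon_{\cB}(v_{ij})=\delta_{ij}$, so $v$ is a corepresentation of $\cG$, and by Proposition \ref{gladkie} its entries lie in $\cB$. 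A direct computation from \eqref{om} shows that the GNS inner product $\langle x,y\rangle=\om(x^*y)$ is diagonal in this basis, with Gram matrix $G=\tfrac1{1+q^2}\,\mathrm{diag}(1,q^2,1,q^2)$.

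First I would compute $v$ explicitly. Since $\Psi_{\gB}$ is a unital $*$-homomorphism, $\Psi_{\gB}(\bn^*)=\Psi_{\gB}(\bn)^*$, $\Psi_{\gB}(\bn\bn^*)=\Psi_{\gB}(\bn)\Psi_{\gB}(\bn)^*$ and $\Psi_{\gB}(\bn^*\bn)=\Psi_{\gB}(\bn)^*\Psi_{\gB}(\bn)$, so multiplying out the matrix \eqref{PsiB} and reading off coefficients yields all entries $v_{ij}$. Passing to the orthonormal basis $e_i/\sqrt{G_{ii}}$ conjugates $v$ by the diagonal matrix $D=\mathrm{diag}(1,q,1,q)=\sqrt{1+q^2}\,G^{1/2}$, and comparing the entries shows $DvD^{-1}=\ba$. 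Thus $\ba=G^{1/2}vG^{-1/2}$, and since $G$ is a positive scalar matrix, $\ba^*\ba=\I$ is equivalent to $v^*Gv=G$ while $\ba\ba^*=\I$ is equivalent to $vG^{-1}v^*=G^{-1}$.

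The isometry $v^*Gv=G$ is exactly where invariance of $\om$ enters. For any two basis vectors, $(\om\tens\id_{\gB})\bigl(\Psi_{\gB}(e_j)^*\Psi_{\gB}(e_k)\bigr)=\sum_{i,l}\om(e_i^*e_l)\,v_{ij}^*v_{lk}=(v^*Gv)_{jk}$, whereas the $*$-homomorphism property gives $\Psi_{\gB}(e_j)^*\Psi_{\gB}(e_k)=\Psi_{\gB}(e_j^*e_k)$, and state preservation gives $(\om\tens\id_{\gB})\Psi_{\gB}(e_j^*e_k)=\om(e_j^*e_k)\I=G_{jk}\I$. Hence $v^*Gv=G$, that is $\ba^*\ba=\I$. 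For the reverse identity I would use that a corepresentation of a compact quantum group is invertible: since $v_{ij}\in\cB$, the matrix $\bigl(\kappa_{\cB}(v_{ij})\bigr)_{i,j}$ is a two-sided inverse of $v$ by the defining property of the antipode. Combining invertibility with $v^*Gv=G$ yields $v^{-1}=G^{-1}v^*G$, whence $vG^{-1}v^*=G^{-1}$, i.e. $\ba\ba^*=\I$, and $\ba$ is unitary.

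The conceptual content---that a state-preserving $*$-homomorphic coaction restricts to an isometric corepresentation, which is automatically unitary because corepresentations of compact quantum groups are invertible through the antipode---is the heart of the argument. The main obstacle is bookkeeping: checking that the GNS-orthonormalisation of the corepresentation matrix reproduces the exact powers of $q$ appearing in $\ba$, and keeping the non-tracial (KMS) nature of $\om$ straight, which is precisely why the two unitarity identities must be drawn from different inputs---invariance of $\om$ for one, the antipode for the other---rather than both from invariance alone. Alternatively one could verify $\ba^*\ba=\I=\ba\ba^*$ directly from the relations \eqref{str15}, but that computation is considerably longer and less transparent.
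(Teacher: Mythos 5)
Your proposal is correct and follows essentially the same route as the paper: you identify $\ba$ as the corepresentation matrix in the basis orthonormal for the inner product $\om(m_1^*m_2)$, derive $\ba^*\ba=\I$ from $*$-multiplicativity of $\Psi_{\gB}$ together with invariance of $\om$, and obtain invertibility (hence $\ba\ba^*=\I$) by applying the antipode $\kappa_{\cB}$ entrywise, exactly as in the paper's proof. The only differences are presentational (you conjugate by the Gram matrix rather than writing the orthonormal basis directly, and you spell out the isometry computation the paper cites from \cite{qs}).
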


\begin{proof}
Let $a_{i,j}$ be the $(i,j)$-entry of $\ba$ and define
\begin{equation*}
\begin{aligned}
e_1&=\sqrt{1+q^2}\bn\bn^*,\quad&e_2&=\tfrac{\sqrt{1+q^2}}{q}\bn,\\
e_3&=\sqrt{1+q^2}\bn^*,&e_4&=\tfrac{\sqrt{1+q^2}}{q}\bn^*\bn.
\end{aligned}
\end{equation*}
One can check that
\begin{equation*}
\Psi_{\gB}(e_j)=\sum_{i=1}^4e_i\tens{a_{i,j}}
\end{equation*}
and from this and \eqref{COACT} it follows that
\begin{equation}\label{Deaij}
\Delta_{\gB}(a_{i,j})=\sum_{k=1}^4a_{i,k}\tens{a_{k,j}}.
\end{equation}

Moreover, since $\{e_1,\ldots,e_4\}$ is an orthonormal basis of $M_2$ for the scalar product $\is{m_1}{m_2}=\om(m_1^*m_2)$ for $m_1,m_2\in{M_2}$, we have
\begin{equation*}
\ba^*\ba=\I
\end{equation*}
(cf.~\cite[Proof of Theorem 7.3]{qs}). To prove that $\ba$ is unitary it is therefore enough to show that $\ba$ is right invertible. We know that the entries of $\ba$ belong to $\cB$. In particular we can apply $\kappa_{\cB}$ to these elements. If $\boldsymbol{b}$ is a matrix with entries $\kappa_{\cB}(a_{i,j})$ then $\ba\boldsymbol{b}$ is (by \eqref{Deaij}) a matrix with $\epsilon_{\cB}(a_{i,j})\I$ as the $(i,j)$-entry. It follows from \eqref{gladkiee} that $\ba$ is invertible.
\end{proof}

\begin{thm}\label{main}
There exists a unique $\Gamma\in\Mor\bigl(\C(\SqO),\gB\bigr)$ such that $\Lambda=\Gamma\comp\Lambda_q$.
\end{thm}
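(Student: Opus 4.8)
The plan is to construct $\Gamma$ via the universal property of $\C\bigl(\SqO\bigr)$. If a factorization $\Lambda=\Gamma\comp\Lambda_q$ is to hold, then by \eqref{Lamq} the images of the generators under $\Gamma$ are forced:
\[
\Gamma(L)=b,\qquad\Gamma(G)=-q^{-1}c,\qquad\Gamma(A)=qd,
\]
and, using the expressions $K=A^*A+G^*G$ and $C=q^{-1}LA^*+q^2AG^*$ recorded in point \ref{remPod1} of Remark \ref{remPod},
\[
\Gamma(K)=q^2d^*d+q^{-2}c^*c,\qquad\Gamma(C)=bd^*-q^2dc^*.
\]
First I would take these five elements of $\gB$ as the candidate images of $A,C,G,K,L$ and verify that they satisfy the whole Podle\'s system \eqref{Prel}.

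The heart of the argument is this verification, and its input is twofold. On one hand there are the relations \eqref{str15} satisfied by $b,c,d$ (Proposition \ref{gladkie}), which are precisely the defining relations of $\gA$ transported by $\Lambda$. On the other hand there is the unitarity of the matrix $\ba\in{M_4(\gB)}$ (Proposition \ref{aProp}). A useful preliminary reduction trims the list one must check: by Subsection \ref{redu} relations \eqref{P10}, \eqref{P17} and \eqref{P20} are automatic, and by Proposition \ref{CK} relation \eqref{P12} follows from \eqref{P5}--\eqref{P8}, \eqref{P11} and \eqref{P16}. I would first dispose of the relations expressing products in ``normal'' order (such as \eqref{P1}, \eqref{P5}, \eqref{P7}, \eqref{P18}) together with the homogeneity and commutation relations, since these translate more or less directly into consequences of \eqref{str15}.

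The main obstacle I anticipate is the family of relations expressing the ``reversed'' products, namely \eqref{P2} ($LL^*$), \eqref{P6} ($AA^*$) and \eqref{P8} ($CC^*$), together with whatever commutation relations are not already forced by \eqref{str15}. These are not consequences of state preservation alone — they are exactly the relations that distinguish the quantum group $\C\bigl(\SqO\bigr)$ from the quantum semigroup $\gA$ — and to obtain them I would use the half of the unitarity of $\ba$ coming from $\ba\ba^*=\I$, i.e.\ the right invertibility established in Proposition \ref{aProp} by means of the antipode $\kappa_{\cB}$. This is precisely the step at which the compact quantum group structure of $\cG$ enters, as opposed to the mere existence of a state-preserving action.

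Once every relation in \eqref{Prel} is confirmed for the candidate images, the universal property of $\C\bigl(\SqO\bigr)$ yields a unique $\Gamma\in\Mor\bigl(\C(\SqO),\gB\bigr)$ sending $A,C,G,K,L$ to these elements. The identity $\Lambda=\Gamma\comp\Lambda_q$ then holds because both sides agree on the generators $\beta,\gamma,\delta$ of $\gA$: indeed $(\Gamma\comp\Lambda_q)(\beta)=\Gamma(L)=b=\Lambda(\beta)$, and likewise for $\gamma$ and $\delta$ by \eqref{Lamq}. Finally, uniqueness of $\Gamma$ is immediate: by point \ref{remPod1} of Remark \ref{remPod} the elements $A,G,L$ generate $\C\bigl(\SqO\bigr)$, and the factorization forces their images to be $qd$, $-q^{-1}c$ and $b$, so $\Gamma$ is determined on a generating set.
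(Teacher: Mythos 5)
Your outline is in fact the paper's own proof with the central verification left blank: the forced images you write down are exactly the elements \eqref{dotted}, your inputs are \eqref{str15} and Proposition \ref{aProp}, your reductions (Subsection \ref{redu} for \eqref{P10}, \eqref{P17}, \eqref{P20}, and Proposition \ref{CK} for \eqref{P12}) are the same, and the endgame --- universal property of $\C\bigl(\SqO\bigr)$ plus uniqueness because $A,G,L$ generate --- is identical. So the question is whether your plan for verifying \eqref{Prel} would go through, and there it contains a genuine flaw.

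You classify \eqref{P1}, \eqref{P5}, \eqref{P7} as translating ``more or less directly into consequences of \eqref{str15},'' reserving the unitarity of $\ba$ for the reversed relations \eqref{P2}, \eqref{P6}, \eqref{P8}. This puts the hardest part of the proof in the easy bucket. Indeed, \eqref{gimel} gives $\dL^*\dL=b^*b=\I-\dK-d^*d$, so modulo \eqref{str15} relation \eqref{P1} is equivalent to $q^2d^*d=\dK-\dK^2$, i.e.\ to \eqref{P5}; and since $\dK=\dA^*\dA+\dG^*\dG$ by definition, \eqref{P5} is in turn equivalent to \eqref{P4}, $\dK^2=\dG^*\dG$, while \eqref{P7} additionally encodes $\dC^*\dC=\dA^*\dA$. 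These identities, together with the normality \eqref{P3} and \eqref{P4} which your roadmap never mentions, are precisely the crux that the paper's Steps 1 and 2 are built to reach; they are not consequences of \eqref{str15}, since anything derived from \eqref{str15} alone already holds for $\beta,\gamma,\delta$ in the quantum semigroup $\gA$ itself --- and if the whole list \eqref{Prel} held there, one would get $\gA\cong\C\bigl(\SqO\bigr)$ and the theorem would follow for arbitrary $\om$-preserving quantum families with no quantum group hypothesis at all (compare the case $q=0$, where Proposition \ref{ncqg0} and Remark \ref{dn0} show that $\QMap^{\omega_0}\bigl(\qs(M_2)\bigr)$ is far larger than the universal quantum group $\TT$). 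Moreover, the single tool you allot to the hard relations, the matrix identity $\ba\ba^*=\I$, is weaker than what the paper actually uses: Step 1 exploits the fact that the inverse of $\ba$ is $(\id_{M_4}\tens\kappa_{\cB})\ba$ \emph{entrywise}, hence knows $\kappa_{\cB}$ on every entry of \eqref{nowa}, and then uses antimultiplicativity of $\kappa_{\cB}$ to obtain $\dK=\kappa_{\cB}(\dK)=\dC^*\dC+\dG\dG^*$; only this identity, combined with \eqref{jeden22} and \eqref{dwa33}, yields $\dA^*\dA=\dC^*\dC$ and the normality of $\dG$, from which all of your ``easy'' relations then follow. As written, your plan would stall exactly at that point.
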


\begin{proof}
Let
\begin{equation}\label{dotted}
\dA=qd,\quad\dC=bd^*-q^2dc^*,\quad\dG=-q^{-1}c,\quad\dK=q^2d^*d+q^{-2}c^*c,\quad\dL=b.
\end{equation}
Using relations \eqref{alef}, \eqref{gimel}, \eqref{dalet} and \eqref{he} one can show that the matrix $\ba$ defined in Proposition \ref{aProp} can be expressed using $\dA,\dC,\dG,\dK$ and $\dL$:
\begin{equation}\label{nowa}
\ba=\begin{bmatrix}
\I-q^2\dK&-\dA&-q\dA^*&q\dK\\
q\dC&\dL&-q^2\dG^*&-\dC\\
\dC^*&-\dG&\dL^*&-q^{-1}\dC^*\\
q\dK&q^{-1}\dA&\dA^*&\I-\dK
\end{bmatrix}.
\end{equation}
Our aim is to use the unitarity of $\ba$ and the remaining relations \eqref{str15} to show that the elements \eqref{dotted} satisfy the Podle\'s relations \eqref{Prel}. 

By the results of Subsection \ref{redu} we do not need to check relations \eqref{P10}, \eqref{P17} and \eqref{P20}. Moreover \eqref{P13}, \eqref{P14}, \eqref{P15} and \eqref{P18} follow immediately from \eqref{waw}--\eqref{tet}. Relation \eqref{P12} will be shown the moment we verify \eqref{P5}--\eqref{P8}, \eqref{P11} and \eqref{P16}, by Proposition \ref{CK}.

To make calculations easier let us list some of the relations following from the unitarity of $\ba$. Considering the $(1,2)$, $(1,3)$, $(2,2)$, $(3,3)$ $(4,3)$ and $(4,4)$ entries of $\ba\ba^*$ we obtain
\begin{subequations}
\begin{align}
q(\I-q^2\dK)\dC^*-\dA\dL^*+q^3\dA^*\dG-q\dK\dC^*&=0,\label{dwa12}\\
(\I-q^2\dK)\dC+\dA\dG^*-q\dA^*\dL-\dK\dC&=0,\label{dwa13}\\
q^2\dC\dC^*+\dL\dL^*+q^4\dG^*\dG+\dC\dC^*&=\I,\label{dwa22}\\
\dC^*\dC+\dG\dG^*+\dL^*\dL+q^{-2}\dC^*\dC&=\I,\label{dwa33}\\
q\dK\dC-q^{-1}\dA\dG^*+\dA^*\dL-q^{-1}(\I-\dK)\dC&=0,\label{dwa43}\\
q^2\dK^2+q^{-2}\dA\dA^*+\dA^*\dA+(\I-\dK)^2&=\I.\label{dwa44}
\end{align}
\end{subequations}
Similarly considering the $(2,2)$ and $(4,4)$ entries of $\ba^*\ba$ we obtain
\begin{subequations}
\begin{align}
\dA^*\dA+\dL^*\dL+\dG^*\dG+q^{-2}\dA^*\dA&=\I,\label{jeden22}\\
q^2\dK^2+\dC^*\dC+q^{-2}\dC\dC^*+(\I-\dK)^2&=\I.\label{jeden44}
\end{align}
\end{subequations}
Finally let us rewrite \eqref{bet} in terms of elements \eqref{dotted}:
\begin{equation}\label{A2}
\dL^*\dL+q^{-2}\dA^*\dA+q^2\dG\dG^*+q^{-2}\dA\dA^*=\I.
\end{equation}

\noindent\underline{Step 1: normality of $\dG$.} The matrix $\ba\in{M_4(\cB)}$ is unitary and its inverse is $(\id_{M_4}\tens\kappa_{\cB})\ba$. Therefore we know the values of $\kappa_{\cB}$ on matrix elements of \eqref{nowa}:
\begin{equation*}
\begin{aligned}
\kappa_{\cB}(\dA)&=-q\dC^*,&\kappa_{\cB}(\dA^*)&=-q^{-1}\dC,\\
\kappa_{\cB}(\dC)&=-q^{-1}\dA^*,&\kappa_{\cB}(\dC^*)&=-q\dA,\\
\kappa_{\cB}(\dG)&=q^2\dG,&\kappa_{\cB}(\dG^*)&=q^{-2}\dG^*,\\
\kappa_{\cB}(\dL)&=\dL^*,&\kappa_{\cB}(\dL^*)&=\dL,\\
\kappa_{\cB}(\dK)&=\dK.
\end{aligned}
\end{equation*}
By antimultiplicativity of $\kappa_{\cB}$ we have
\begin{equation*}
\kappa_{\cB}(\dG^*\dG)=\dG\dG^*\quad\text{and}\quad\kappa_{\cB}(\dA^*\dA)=\dC^*\dC.
\end{equation*}
Therefore
\begin{equation}\label{GG}
\dA^*\dA+\dG^*\dG=\dK=\kappa_{\cB}(\dK)=\dC^*\dC+\dG\dG^*.
\end{equation}
Therefore, by \eqref{jeden22} and \eqref{dwa33} we have 
\begin{equation}\label{AC}
\dA^*\dA=\dC^*\dC,
\end{equation}
and so, by \eqref{GG} again we have
\begin{equation}\label{GGGG}
\dG^*\dG=\dG\dG^*.
\end{equation}
We have thus checked the relation \eqref{P3}.

\noindent\underline{Step 2: consequences.} Now we note that normality of $\dG$  implies additional commutation relations with $\dL$ and $\dA$. By Theorem \ref{FPR} we have
\begin{subequations}
\begin{align}
\dL\dG^*&=q^4\dG^*\dL,\label{LGgw}\\
\dA\dG^*&=q^2\dG^*\dA.\label{AGgw}
\end{align}
\end{subequations}
Moreover \eqref{jeden44} and \eqref{dwa44} together with \eqref{AC} give
\begin{equation}\label{AC2}
\dC\dC^*=\dA\dA^*.
\end{equation}
From \eqref{jeden44} we have
\begin{equation}\label{str332}
\dA^*\dA+q^{-2}\dA\dA^*=2\dK-\dK^2-q^2\dK^2
\end{equation}
while inserting \eqref{AC} into \eqref{dwa33} and using \eqref{A2} gives
\begin{equation}\label{str331}
\dA^*\dA-q^{-2}\dA\dA^*=(q^2-1)\dG\dG^*=(q^2-1)\dG^*\dG.
\end{equation}
Adding \eqref{str331} and \eqref{str332} gives
\begin{equation}\label{naG}
2\dA^*\dA=q^2\dG^*\dG-q^2\dK^2+2\dK-\dG^*\dG-\dK^2.
\end{equation}
Since $\dK=\dG^*\dG+\dA^*\dA$, from \eqref{naG} we get
\begin{equation}\label{KGG}
\dK^2=\dG^*\dG,
\end{equation}
so \eqref{P4} is verified. As a consequence \eqref{naG} gives
\begin{equation}\label{AAKK}
\dA^*\dA=\dK-\dK^2
\end{equation}
and thus \eqref{P5} and \eqref{P7} are checked. Now inserting \eqref{AAKK} into \eqref{str331} quickly gives
\begin{equation}\label{AAKK2}
\dA\dA^*=q^2\dK-q^4\dK^2,
\end{equation}
so we get \eqref{P6} and (by \eqref{AC2}) \eqref{P8}.

Inserting \eqref{KGG}, \eqref{AAKK} and \eqref{AAKK2} into \eqref{A2} gives quickly
\begin{equation*}
\dL^*\dL=(\I-\dK)(\I-q^{-2}\dK).
\end{equation*}
Similarly inserting \eqref{AC2} into \eqref{dwa22} and using \eqref{AAKK} and \eqref{AAKK2} gives
\begin{equation*}
\dL\dL^*=(\I-q^2\dK)(I-q^4\dK).
\end{equation*}
This means that we checked relations \eqref{P1} and \eqref{P2}.

\noindent\underline{Step 3: commutation of $\dA$ and $\dC$.} By \eqref{dotted} we have
\begin{equation*}
\dC=q^{-1}\dL\dA^*+q^2\dA\dG^*.
\end{equation*}
Therefore, using \eqref{str331}, \eqref{AGgw}, \eqref{GGGG} and analogs for $\dA$, $\dG$ and $\dL$ of relations \eqref{P14} and \eqref{P18} we compute
\begin{equation*}
\begin{split}
\dC\dA&=(q^{-1}\dL\dA^*+q^2\dA\dG^*)\dA\\
&=q^{-1}\dL\dA^*\dA+q^2\dA\dG^*\dA\\
&=q^{-1}\dL\bigl(q^{-2}\dA\dA^*+(q^2-1)\dG^*\dG\bigr)+q^2\dA\dG^*\dA\\
&=q^{-3}\dL\dA\dA^*+(q-q^{-1})\dL\dG^*\dG+\dA^2\dG^*\\
&=q^{-3}\dL\dA\dA^*+(q-q^{-1})\dL\dG\dG^*+\dA^2\dG^*\\
&=q^{-1}\dA\dL\dA^*+(q-q^{-1})\dL\dG\dG^*+\dA^2\dG^*\\
&=q^{-1}\dA\dL\dA^*+(q^2-1)\dA^2\dG^*+\dA^2\dG^*\\
&=q^{-1}\dA\dL\dA^*+q^2\dA^2\dG^*=\dA\dC,
\end{split}
\end{equation*}
which verifies relation \eqref{P16}.

\noindent\underline{Step 4: finishing touches.} Let us now address relation \eqref{P11}. We have by \eqref{str331} and \eqref{AGgw}
\begin{equation*}
\begin{split}
\dK\dA&=(\dA^*\dA+\dG^*\dG)\dA\\
&=\dA^*\dA\dA+\dG^*\dG\dA\\
&=\bigl(q^{-2}\dA\dA^*+(q^2-1)\dG^*\dG\bigr)\dA+\dG^*\dG\dA\\
&=q^{-2}\dA\dA^*\dA+(q^2-1)\dG^*\dG\dA+\dG^*\dG\dA\\
&=q^{-2}\dA\dA^*\dA+q^2\dG^*\dG\dA\\
&=q^{-2}\dA\dA^*\dA+q^{-2}\dA\dG^*\dG=q^{-2}\dA\dK.
\end{split}
\end{equation*}
In addition to \eqref{P11} we now also have \eqref{P12} (cf.~Proposition \ref{CK}).

Let us rewrite \eqref{dwa12} and \eqref{dwa13} using \eqref{AGgw} as
\begin{equation*}
\begin{split}
\dC-q^2\dC\dK-q^{-1}\dL\dA^*+q^2\dG^*\dA-\dC\dK&=0,\\
\dC-q^2\dK\dC-\dA^*\dL+\dA\dG^*-\dK\dC&=0.
\end{split}
\end{equation*}
Consequently
\begin{equation*}
q^2(\dK\dC-\dC\dK)+q\dA^*\dL-q^{-1}\dL\dA^*+(\dK\dC-\dC\dK)=0.
\end{equation*}
Now note that
\begin{equation}\label{stage}
q^{-1}\dL\dA^*+q^2\dA\dG^*=\dC=q\dA^*\dL+\dG^*\dA
\end{equation}
by \eqref{dotted} and \eqref{he}. Therefore, using \eqref{AGgw}, we obtain
\begin{equation*}
q^{-1}\dL\dA^*-q\dA^*\dL=(1-q^4)\dG^*\dA.
\end{equation*}
Thus \eqref{stage} can be rewritten as
\begin{equation*}
(1+q^2)(\dK\dC-\dC\dK)=(1-q^4)\dG^*\dA.
\end{equation*}
Now using \eqref{P12} we obtain $\dK\dC=\dG^*\dA$ and again by \eqref{AGgw} we get
\begin{equation*}
q^{-1}\dA\dG^*=q\dK\dC.
\end{equation*}
Plugging this into \eqref{dwa43} gives
\begin{equation*}
\dA^*\dL=q^{-1}(\I-\dK)\dC,
\end{equation*}
so \eqref{P19} is verified.

The last relation from the list \eqref{Prel} which remains to be checked  is \eqref{P9}. In order to do it we compute using \eqref{stage}, \eqref{LGgw}, \eqref{AGgw}, \eqref{P18}, \eqref{P14} and \eqref{P13}:
\begin{equation*}
\begin{split}
\dL\dK&=\dL(\dA^*\dA+\dG^*\dG)=\dL\dA^*\dA+\dL\dG^*\dG\\
&=(q^2\dA^*\dL+q\dG^*\dA-q^3\dA\dG^*)\dA+\dL\dG^*\dG\\
&=q^2\dA^*\dL\dA+q\dG^*\dA\dA-q^3\dA\dG^*\dA+\dL\dG^*\dG\\
&=q^2\dA^*\dL\dA+q\dG^*\dA\dA-q^3\dA\dG^*\dA+q^4\dG^*\dL\dG\\
&=q^2\dA^*\dL\dA+q\dG^*\dA\dA-q^5\dG^*\dA\dA+q^4\dG^*\dL\dG\\
&=q^2\dA^*\dL\dA+\dG^*\dL\dG-q^4\dG^*\dL\dG+q^4\dG^*\dL\dG\\
&=q^4\dA^*\dA\dL+q^4\dG^*\dG\dL=q^4\dK\dL
\end{split}
\end{equation*}
which means that $\dA,\dC,\dG,\dK$ and $\dL$ satisfy all relations from the list \eqref{Prel}. Therefore there exists a unique $\Gamma:\C\bigl(\SqO\bigr)\to\gB$ such that
\begin{equation*}
\Gamma(A)=\dA,\quad\Gamma(C)=\dC,\quad\Gamma(G)=\dG,\quad\Gamma(K)=\dK,\quad\Gamma(L)=\dL.
\end{equation*}
By \eqref{Lamq} we have $\Gamma\comp\Lambda_q=\Lambda$. Since $\C\bigl(\SqO\bigr)$ is generated by $A,G$ and $L$, this condition determines $\Gamma$ uniquely.
\end{proof}

\begin{cor}
Let $\Gamma\in\Mor\bigl(\C(\SqO),\gB\bigr)$ be the map defined in Theorem \ref{main}. Then 
\begin{enumerate}
\item\label{pierwszy} $(\id_{M_2}\tens\Gamma)\comp\Psi_q=\Psi_{\gB}$ and this property determines $\Gamma$ uniquely.
\item\label{drugi} $\Delta_{\cB}\comp\Gamma=(\Gamma\tens\Gamma)\comp\Delta_q$.
\end{enumerate}
\end{cor}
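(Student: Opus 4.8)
The plan is to derive both parts of the corollary formally, without any new computation, from the single identity $\Lambda=\Gamma\comp\Lambda_q$ established in Theorem \ref{main} together with the two factorizations $\Psi_q=(\id_{M_2}\tens\Lambda_q)\comp\bPhi$ (Subsection \ref{Act}) and $\Psi_{\gB}=(\id_{M_2}\tens\Lambda)\comp\bPhi$ (the defining property of $\Lambda$ in Section \ref{char}).

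For part \ref{pierwszy}, existence is a one-line chain of compositions:
\[
(\id_{M_2}\tens\Gamma)\comp\Psi_q
=(\id_{M_2}\tens\Gamma)\comp(\id_{M_2}\tens\Lambda_q)\comp\bPhi
=\bigl(\id_{M_2}\tens(\Gamma\comp\Lambda_q)\bigr)\comp\bPhi
=(\id_{M_2}\tens\Lambda)\comp\bPhi=\Psi_{\gB}.
\]
For uniqueness I would take any $\Gamma'\in\Mor\bigl(\C(\SqO),\gB\bigr)$ with $(\id_{M_2}\tens\Gamma')\comp\Psi_q=\Psi_{\gB}$, evaluate both sides on $\bn$, and compare coefficients in the basis $\{\bn\bn^*,\bn,\bn^*,\bn^*\bn\}$ using \eqref{Psiq} and \eqref{PsiB}. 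This reads off $\Gamma'(A)=qd=\dA$, $\Gamma'(L)=b=\dL$ and $\Gamma'(G)=-q^{-1}c=\dG$, i.e.\ $\Gamma'$ agrees with $\Gamma$ on $A,G,L$. Since these generate $\C(\SqO)$ (Remark \ref{remPod}\ref{remPod1}), this forces $\Gamma'=\Gamma$.

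For part \ref{drugi} I would use that both $\Lambda$ and $\Lambda_q$ are quantum semigroup morphisms. This is exactly the consequence of \cite[Theorem 5.4(6)]{qs} recalled after Proposition \ref{Des}: it applies to $\Lambda$ because $\Psi_{\gB}$ satisfies the coaction identity \eqref{COACT} for the comultiplication $\Delta_{\gB}$, and to $\Lambda_q$ because $\Psi_q$ is an action of $\SqO$ with comultiplication $\Delta_q$. Hence
\[
\Delta_{\gB}\comp\Lambda=(\Lambda\tens\Lambda)\comp\bDelta
\qquad\text{and}\qquad
\Delta_q\comp\Lambda_q=(\Lambda_q\tens\Lambda_q)\comp\bDelta.
\]
Substituting $\Lambda=\Gamma\comp\Lambda_q$ into the first identity and then invoking the second gives
\[
\Delta_{\gB}\comp\Gamma\comp\Lambda_q
=(\Gamma\tens\Gamma)\comp(\Lambda_q\tens\Lambda_q)\comp\bDelta
=(\Gamma\tens\Gamma)\comp\Delta_q\comp\Lambda_q.
\]

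The only step needing a word of justification — and the closest thing to an obstacle — is cancelling $\Lambda_q$ on the right. This is legitimate because $\Lambda_q$ is surjective: by \eqref{Lamq} its range contains $L,G$ and $A$, which generate $\C(\SqO)$, so $\Lambda_q$ is an epimorphism of $\cst$-algebras and may be cancelled. This yields $\Delta_{\gB}\comp\Gamma=(\Gamma\tens\Gamma)\comp\Delta_q$, which is the asserted identity \ref{drugi} (the generators' images under $\Gamma$ lie in $\cB$ by Proposition \ref{gladkie}, so there $\Delta_{\gB}$ restricts to the Hopf-algebra comultiplication $\Delta_{\cB}$). In particular $\Gamma$ is a quantum group morphism, completing the identification of $\SqO$ as the universal object.
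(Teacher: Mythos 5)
Your proof is correct; part \eqref{pierwszy} follows essentially the paper's route, while part \eqref{drugi} takes a genuinely different one. For \eqref{pierwszy} the paper reads the identity off by applying $\id_{M_2}\tens\Gamma$ to $\Psi_q(\bn)$ and comparing \eqref{Psiq} with \eqref{PsiB} via \eqref{dotted} — the same content as your factorization through $\bPhi$ — and its uniqueness argument (evaluate on $\bn$, use that $A,G,L$ generate $\C\bigl(\SqO\bigr)$) is identical to yours. For \eqref{drugi} the paper stays at the level of the actions on $M_2$: it proves the chain of identities \eqref{DeDe} using the action property of $\Psi_q$, statement \eqref{pierwszy}, and the coaction identity \eqref{COACT}, and then once more applies both sides to $\bn$ and compares matrix elements. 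You instead work one level up, at the universal semigroup $(\gA,\bDelta)$: you quote \cite[Theorem 5.4(6)]{qs} (as recalled after Proposition \ref{Des}) to get that both $\Lambda$ and $\Lambda_q$ are quantum semigroup morphisms, substitute $\Lambda=\Gamma\comp\Lambda_q$ from Theorem \ref{main}, and cancel $\Lambda_q$ on the right. The cancellation is legitimate exactly as you argue: the range of the unital $*$-homomorphism $\Lambda_q$ is a closed $*$-subalgebra containing $A,G,L$ by \eqref{Lamq}, hence all of $\C\bigl(\SqO\bigr)$ by Remark \ref{remPod}\eqref{remPod1}, so $\Lambda_q$ is surjective. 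Note that both proofs ultimately rest on this same generation fact — the paper uses it to upgrade equality of matrix elements to equality of morphisms, you use it for surjectivity — but your route replaces the paper's explicit (and self-contained) coaction computation \eqref{DeDe} by the cited semigroup-morphism machinery of \cite{qs}, which makes the argument shorter and purely formal at the cost of leaning on that external result. Your closing observation that $\Delta_{\cB}$ in the statement should be read as $\Delta_{\gB}$ (restricted to the dense Hopf $*$-algebra, where the images of the generators under $\Gamma$ lie by Proposition \ref{gladkie}) correctly resolves what is evidently a typo.
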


\begin{proof}
The formula from statement \eqref{pierwszy} follows easily from \eqref{Psiq} and \eqref{PsiB}. Note that applying this formula to the generator $\bn\in{M_2}$ and comparing matrix elements fixes $\Gamma$ on elements $A,G,L$ generating $\C\bigl(\SqO\bigr)$ (cf.~Remark \ref{remPod}\eqref{remPod1}). 

To prove the second statement we compute
\begin{equation}\label{DeDe}
\begin{split}
\bigl(\id_{M_2}\tens\bigl[(\Gamma\tens\Gamma)\comp\Delta_q)\bigr]\bigr)\comp\Psi_q
&=(\id_{M_2}\tens\Gamma\tens\Gamma)\comp(\id_{M_2}\tens\Delta_q)\comp\Psi_q\\
&=(\id_{M_2}\tens\Gamma\tens\Gamma)\comp(\Psi_q\tens\id_{\C(\SqO)})\comp\Psi_q\\
&=\bigl(\bigl[(\id_{M_2}\tens\Gamma)\comp\Psi_q\bigr]\tens\Gamma\bigr)\comp\Psi_q\\
&=(\Psi_{\gB}\tens\id_{\gB})\comp\Psi_{\gB}\\
&=(\id_{M_2}\tens\Delta_{\gB})\comp\Psi_{\gB}\\
&=(\id_{M_2}\tens\Delta_{\gB})\comp(\id_{M_2}\tens\Gamma)\comp\Psi_q\\
&=\bigl(\id_{M_2}\tens[\Delta_{\gB}\comp\Gamma]\bigr)\comp\Psi_q.
\end{split}
\end{equation}
Again applying both sides of \eqref{DeDe} to $n$ and comparing matrix elements yields statement \eqref{drugi}.
\end{proof}

We have thus proved Theorem \ref{mainR} for $q\in]0,1[$.

\begin{rem}
Let $u$ be the fundamental representation of the quantum group $\SqU$ (\cite[Theorem 1.4 and \S 5]{su2}). Then $S=u\tp{u}$ is a four dimensional representation of $\SqU$ which factorizes through $\SqO$. In other words its matrix elements belong to the $\cst$-subalgebra $\C\bigl(\SqO\bigr)$ of $\C\bigl(\SqU\bigr)$. Conjugating this matrix with
\begin{equation*}
V=\begin{bmatrix}
0&-1&0&0\\
1&0&0&0\\
0&0&0&-1\\
0&0&1&0
\end{bmatrix}
\end{equation*}
we obtain
\begin{equation*}
VSV^*=\begin{bmatrix}
\I-q^2K&-A&-qA^*&qK\\
qC&L&-q^2G^*&-C\\
C^*&-G&L^*&-q^{-1}C^*\\
qK&q^{-1}A&A^*&\I-K
\end{bmatrix}
\end{equation*}
(cf. \eqref{nowa}).
\end{rem}

\section{Cases of $q=1$ and $q=0$}\label{q10}

\subsection{Case $q=1$}\label{q1} The state $\om$ on $M_2$ becomes the normalized trace $\tr$ when we put $q=1$. In this subsection we will show that the universal compact quantum group acting on $M_2$ and preserving the trace is the classical group $\SO(3)$. This is contrary to \cite[Remark on page 203]{wang}. Although for $q=1$ the quantum group $\SqO$ is isomorphic to the classical $\SO(3)$ (\cite[Remark 3]{spheres}), we cannot follow directly the path of Section \ref{char} because some of the tools we used there are no longer applicable to the case $q=1$ (e.g.~Proposition \ref{CK}, cf.~Remark \ref{CKrems}\eqref{CKrem2}).

The description of $\QMap^{\tr}\bigl(\qs(M_2)\bigr)$ is the following.

\begin{prop}\label{Des1}
Let $\QMap^{\tr}\bigl(\qs(M_2)\bigr)=(\gA,\bDelta)$. Then $\gA$ is the universal $\cst$-algebra generated by three elements $\beta$, $\gamma$ and $\delta$ satisfying
\begin{equation*}
\begin{aligned}
\delta^*\delta+\gamma^*\gamma+\delta\delta^*+\beta\beta^*&=\I,
&\beta\gamma&=-\delta^2,\\
\beta^*\beta+\delta^*\delta+\gamma\gamma^*+\delta\delta^*&=\I,
&\gamma\beta&=-\delta^2,\\
\delta^*\delta+\gamma^*\gamma+\beta^*\beta+\delta^*\delta&=\I,
&\beta\delta&=\delta\beta,\\
\delta\delta^*+\beta\beta^*+\gamma\gamma^*+\delta\delta^*&=\I,
&\delta\gamma&=\gamma\delta\\
\gamma^*\delta-\delta^*\beta&+\beta\delta^*-\delta\gamma^*=0.
\end{aligned}
\end{equation*}
The comultiplication $\bDelta\in\Mor(\gA,\gA\tens\gA)$ is
\begin{equation*}
\begin{split}
\bDelta(\beta)&=\delta\gamma^*\tens\delta-\beta\delta^*\tens\delta+\beta\tens\beta
+\gamma^*\tens\gamma-\delta^*\beta\tens\delta+\gamma^*\delta\tens\delta,\\
\bDelta(\gamma)&=\gamma\delta^*\tens\delta-\delta\beta^*\tens\delta+\gamma\tens\beta
+\beta^*\tens\gamma-\beta^*\delta\tens\delta+\delta^*\gamma\tens\delta,\\
\bDelta(\delta)&=-\gamma^*\gamma\tens\delta-\delta\delta^*\tens\delta+\delta\tens\beta
+\delta^*\tens\gamma+\beta^*\beta\tens\delta+\delta^*\delta\tens\delta.
\end{split}
\end{equation*}
The action of $\QMap^{\tr}\bigl(\qs(M_2)\bigr)$ on $\qs(M_2)$ is given by $\bPhi\in\Mor(M_2,M_2\tens\gA)$ defined by
\begin{equation*}
\bPhi(\bn)=\begin{bmatrix}-\delta&\beta\\\gamma&\delta\end{bmatrix}.
\end{equation*}
The counit $\bepsilon$ maps $\gamma$ and $\delta$ to $0$ and $\beta$ to $1$.
\end{prop}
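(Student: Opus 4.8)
The plan is to follow the proof of Proposition \ref{Des} verbatim, specialising every step to $q=1$, where the Powers state $\om$ degenerates to the normalised trace $\tr$. First I would invoke \cite[Theorem 5.4]{qs} to present $\gA$ as the quotient of the algebra $\AM$ of Proposition \ref{QMM2} by the ideal generated by the set $\bigl\{(\tr\tens\id_{\AM})\PM(m)-\tr(m)\I\st m\in{M_2}\bigr\}$, and I would write $\alpha,\beta,\gamma,\delta$ for the images of $\malpha,\mbeta,\mgamma,\mdelta$ under the quotient map $\pi\colon\AM\to\gA$. These four elements generate $\gA$ and satisfy $\bPhi=(\id_{M_2}\tens\pi)\comp\PM$.

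Next I would impose trace-preservation on the basis $\{\bn\bn^*,\bn,\bn^*,\bn^*\bn\}$ of $M_2$. Using $\tr(\bn\bn^*)=\tr(\bn^*\bn)=\tfrac12$ and $\tr(\bn)=\tr(\bn^*)=0$, the conditions $(\tr\tens\id_\gA)\bPhi(m)=\tr(m)\I$ reduce to
\begin{equation*}
\alpha+\delta=0,\qquad
\alpha\alpha^*+\beta\beta^*+\gamma\gamma^*+\delta\delta^*=\I,\qquad
\alpha^*\alpha+\gamma^*\gamma+\beta^*\beta+\delta^*\delta=\I.
\end{equation*}
The first forces $\alpha=-\delta$; with this substitution the two quadratic conditions become two of the norm relations in the statement, while substituting $\alpha=-\delta$ into the seven relations \eqref{nstnkw} of $\AM$ produces all the remaining listed relations (each of \eqref{nstnkw} turning into exactly one of them).

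For the rest of the structure I would argue exactly as in Proposition \ref{Des}. Since $\pi$ is a quantum semigroup morphism by \cite[Theorem 5.4(5)]{qs}, the formula for $\bDelta$ is obtained by applying $\pi\tens\pi$ to \eqref{BigDelta} and substituting $\alpha=-\delta$ (using the newly found relations where needed); the counit $\bepsilon$ is inherited from the counit $\eM$ of $\AM$, giving $\bepsilon(\gamma)=\bepsilon(\delta)=0$ and $\bepsilon(\beta)=1$; and the action $\bPhi(\bn)$ follows from \eqref{Phibaza}. To show that $\gA$ is moreover \emph{the} universal $\cst$-algebra for the listed relations, I would exhibit on the universal algebra $\widetilde{\gA}$ of those relations a morphism $\bPhi$ given by the stated matrix and note that it preserves $\tr$ because $\{\bn\bn^*,\bn,\bn^*,\bn^*\bn\}$ spans $M_2$; the universal property of $\QMap^{\tr}\bigl(\qs(M_2)\bigr)$ then forces $\gA=\widetilde{\gA}$.

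There is essentially no new obstacle here: the whole argument is the $q=1$ specialisation of the proof of Proposition \ref{Des}. The only point genuinely worth checking is that that proof remains valid at $q=1$, and it does, since it relies solely on \cite[Theorem 5.4]{qs} together with linear algebra on $M_2$ and never divides by $1-q^2$ or any quantity degenerating at $q=1$. The real $q=1$ pathology of the theory, namely the failure of Proposition \ref{CK} (cf.\ Remark \ref{CKrems}\eqref{CKrem2}), surfaces only in Section \ref{char} and is irrelevant to the present description of $\QMap^{\tr}\bigl(\qs(M_2)\bigr)$.
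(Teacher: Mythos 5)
Your proposal is correct and is essentially the paper's own argument: the paper gives no separate proof of Proposition \ref{Des1}, treating it (as it does explicitly for Proposition \ref{Des0}) as the specialization of the proof of Proposition \ref{Des} to $q=1$, which is precisely what you carry out --- quotient of $\AM$ via \cite[Theorem 5.4]{qs}, trace preservation forcing $\alpha=-\delta$ and the two extra norm relations, substitution into \eqref{nstnkw}, inheritance of $\bDelta$, $\bepsilon$ and $\bPhi$ through the quotient map, and the universality argument via $\widetilde{\gA}$. Your closing observation that nothing in that proof degenerates at $q=1$ (the genuine $q=1$ pathology, Remark \ref{CKrems}\eqref{CKrem2}, only affects Section \ref{char}) is exactly the right point to check and is the reason the paper can reuse the argument.
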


Let $\cG=(\gB,\Delta_{\gB})$ be a compact quantum group acting on $M_2$ preserving the state $\tr$ and let $\Psi_{\cB}\in\Mor(M_2,M_2\tens\gB)$ be this action. There exists a unique $\Lambda\in\Mor(\gA,\gB)$ such that $(\id_{M_2}\tens\Lambda)\comp\bPhi=\Psi_{\gB}$. As in Proposition \ref{gladkie}  we let
\[
b=\Lambda(\beta),\quad{c=\Lambda(\gamma),}\quad{d=\Lambda(\delta).}
\]
Then we have
\[
\Psi_{\gB}(\bn)=\begin{bmatrix}-d&b\\c&d\end{bmatrix}
\]
and
\begin{equation}\label{commu}
\begin{aligned}
d^*d+c^*c+dd^*+bb^*&=\I,&bc&=-d^2,\\
b^*b+d^*d+cc^*+dd^*&=\I,&cb&=-d^2,\\
d^*d+c^*c+b^*b+d^*d&=\I,&bd&=db,\\
dd^*+bb^*+cc^*+dd^*&=\I,&dc&=cd\\
c^*d-d^*b&+bd^*-dc^*=0.
\end{aligned}
\end{equation}
Moreover the matrix
\[
\begin{bmatrix}
dd^*+bb^*&-d&-d^*&d^*d+c^*c\\
bd^*-dc^*&b&c^*&c^*d-d^*b\\
db^*-cd^*&c&b^*&d^*c-b^*d\\
cc^*+dd^*&d&d^*&b^*b+d^*d
\end{bmatrix}.
\]
describing the action of $\cG$ on elements of the basis $\{\bn\bn^*,\bn,\bn^*,\bn^*\bn\}$ is unitary. Now the same reasoning as presented in the proof of Theorem \ref{main} shows that $c$ is normal.

Using the normality of $c$ and relations \eqref{commu} we easily show that $b$ and $d$ are also normal. Then, using Theorem \ref{FPR} we show that $b,c,d$ and their adjoints all commute. Moreover $bc=-d^2$ and
\[
2|d|^2+|b|^2+|c|^2=\I.
\]

Let us define the following subset of $\CC^3$:
\[
\sS=\left\{\begin{bmatrix}s\\t\\r\end{bmatrix}\in\CC^3\st
\begin{array}{r@{\;}c@{\;}l}st&=&-r^2,\\|s|&+&|t|=1\end{array}\right\}.
\]
It is an amusing exercise to show that $\sS$ is homeomorphic to $\mathbb{RP}(3)$ and the multiplication
\[
\begin{bmatrix}s\\t\\r\end{bmatrix}\cdot\begin{bmatrix}s\\t\\r\end{bmatrix}=
\begin{bmatrix}2(r\Bar{t}-s\Bar{r})r'+ss'+\Bar{t}t'\\
2(t\Bar{r}-r\Bar{s})r'+ts'+\Bar{s}t'\\
\bigl(|s|^2-|t|^2\bigr)r'+rs'+\Bar{r}t'\end{bmatrix}
\]
gives this space a locally compact group structure with which $\sS$ is isomorphic to $\SO(3)$. The unit element is
\[
\begin{bmatrix}1\\0\\0\end{bmatrix}
\]
and the inverse is described by
\[
\begin{bmatrix}s\\t\\r\end{bmatrix}^{-1}=
\begin{bmatrix}\Bar{s}\\t\\x\end{bmatrix},
\]
where
\[
x=\begin{cases}\tfrac{|s|t}{r}&r\neq{0},\\0&\text{otherwise.}\end{cases}
\]

Using the above description of $\SO(3)$ we can view $\C\bigl(\SO(3)\bigr)$ and its standard comultiplication in the following way: $\C\bigl(\SO(3)\bigr)$ is the universal $\cst$-algebra generated by three normal and commuting elements $S,T$ and $R$ satisfying the relations
\[
ST=-R^2,\quad|S|+|T|=\I.
\]
The comultiplication acts on generators in the following way
\[
\begin{split}
S&\longmapsto2(RT^*-SR^*)\tens{R}+S\tens{S}+T^*\tens{T},\\
T&\longmapsto2(TR^*-RS^*)\tens{R}+T\tens{S}+S^*\tens{T},\\
R&\longmapsto(S^*S-T^*T)\tens{R}+R\tens{S}+R^*\tens{T}.
\end{split}
\]

Now we see that there exists a unique $\Gamma\in\Mor\bigl(\C(\SO(3)),\gB)$ such that
\[
\Gamma(S)=b,\quad\Gamma(T)=c,\quad\Gamma(R)=d.
\]
One can check that this map intertwines the standard action of $\SO(3)$ on $M_2$ with $\Psi_{\gB}$. Moreover this condition determines $\Gamma$ uniquely. We have thus proved the following result

\begin{thm}
Let $\cG=(\gB,\Delta_{\gB})$ be a compact quantum group and let $\Psi_{\gB}\in\Mor(M_2,M_2\tens\gB)$ be a continuous action of $\cG$ on $M_2$ preserving the trace. Then there exists a unique $\Gamma\in\Mor\bigl(\C(\SO(3)),\gB\bigr)$ such that
\[
(\id_{M_2}\tens\Gamma)\comp\Psi_1=\Psi_{\gB},
\]
where $\Psi_1\in\Mor\bigl(M_2,M_2\tens\C(\SO(3))\bigr)$ is the morphism describing the standard action of $\SO(3)$ on $M_2$. Moreover $\Gamma$ is a compact quantum group morphism.
\end{thm}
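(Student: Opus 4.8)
The strategy is to follow, with the obvious simplifications of the $q=1$ case, the same route used for $q\in{]0,1[}$ but to replace the target $\C\bigl(\SqO\bigr)$ by the concrete algebra $\C\bigl(\SO(3)\bigr)$ realized above through the group $\sS\cong\SO(3)$. The input is a continuous action $\Psi_{\gB}$ of $\cG$ on $M_2$ preserving $\tr=\omega_1$, together with the universal property of $\QMap^{\tr}\bigl(\qs(M_2)\bigr)=(\gA,\bDelta)$ from Proposition \ref{Des1}. First I would invoke this universal property to obtain a unique $\Lambda\in\Mor(\gA,\gB)$ with $(\id_{M_2}\tens\Lambda)\comp\bPhi=\Psi_{\gB}$, and set $b=\Lambda(\beta)$, $c=\Lambda(\gamma)$, $d=\Lambda(\delta)$. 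These satisfy the relations \eqref{commu} obtained by applying $\Lambda$ to the defining relations of $\gA$, and by the smoothness argument of Proposition \ref{gladkie} (via \cite{marciniak,podles,PodMu}) they lie in the Hopf $*$-algebra $\cB$, so the antipode $\kappa_{\cB}$ is available.

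The next step is to establish the normality of the generators. Exactly as in Step 1 of the proof of Theorem \ref{main}, I would write the $4\times4$ matrix $\ba$ describing the action on the orthonormal basis $\{\bn\bn^*,\bn,\bn^*,\bn^*\bn\}$; the coaction property \eqref{COACT} forces the entries to be group-like in the sense of \eqref{Deaij}, and orthonormality gives $\ba^*\ba=\I$, while applying $\id_{M_4}\tens\kappa_{\cB}$ produces a right inverse, so $\ba$ is unitary. Comparing entries of $\ba\ba^*$ and $\ba^*\ba$ then yields the normality of $c$, precisely as the normality of $\dG$ was deduced in the $q\neq1$ case. From the normality of $c$ and the relations \eqref{commu}, the normality of $b$ and $d$ follows by elementary manipulation, and then Theorem \ref{FPR} (the Fuglede--Putnam--Rosenbloom device, in the commuting case $\lambda=1$) upgrades each commutation relation to its adjoint, so that $b,c,d$ together with their adjoints all mutually commute. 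At this point one also reads off $bc=-d^2$ and the single scalar relation $2|d|^2+|b|^2+|c|^2=\I$.

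Once commutativity is in hand the problem becomes classical. Because the commutative $\cst$-algebra generated by $b,c,d$ is the continuous functions on a compact space, the relations $st=-r^2$ and $|s|+|t|=1$ defining $\sS$ are exactly the constraints on the joint spectrum; thus the $*$-homomorphism sending the generators $S,T,R$ of $\C\bigl(\SO(3)\bigr)$ to $b,c,d$ is well defined by the universal property of $\C\bigl(\SO(3)\bigr)=\C(\sS)$, giving the required $\Gamma\in\Mor\bigl(\C(\SO(3)),\gB\bigr)$. That $(\id_{M_2}\tens\Gamma)\comp\Psi_1=\Psi_{\gB}$ is checked on the generator $\bn$ by comparing the two $2\times2$ matrices, and since $\bn\bn^*,\bn,\bn^*,\bn^*\bn$ span $M_2$ this determines $\Gamma$ uniquely. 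Finally, the intertwining property together with the coassociativity of $\Psi_1$ and \eqref{COACT} gives $\Delta_{\gB}\comp\Gamma=(\Gamma\tens\Gamma)\comp\Delta_1$ by the same diagram chase as in the Corollary following Theorem \ref{main}, so $\Gamma$ is a quantum group morphism.

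\textbf{Main obstacle.}
The delicate point is the passage from the single normality relation coming out of the unitarity of $\ba$ to the full commutativity of the algebra generated by $b,c,d$. In the $q\neq1$ computation this step was comparatively rigid because the scaling factors $q^2,q^4$ in relations such as \eqref{AGgw} and \eqref{LGgw} pinned everything down; at $q=1$ those factors collapse to $1$, and the analogue of Proposition \ref{CK} genuinely fails (Remark \ref{CKrems}\eqref{CKrem2}). Thus I expect the real work to be verifying that normality of $c$ alone, fed through \eqref{commu} and Theorem \ref{FPR}, still forces $b$ and $d$ to be normal and everything to commute, without any available analogue of \eqref{wynik}; this is exactly where the $q=1$ argument must diverge from Section \ref{char} and rely instead on the special algebraic shape of the trace-preserving relations.
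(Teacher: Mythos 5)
Your proposal is correct and follows essentially the same route as the paper's own proof: the universal property of $\QMap^{\tr}\bigl(\qs(M_2)\bigr)$ yielding $\Lambda$ and $b,c,d$, unitarity of the $4\times4$ matrix via the antipode giving normality of $c$, then normality of $b$ and $d$ plus Theorem \ref{FPR} giving full commutativity, and finally the identification of the resulting commutative situation with $\C\bigl(\SO(3)\bigr)=\C(\sS)$ and the same uniqueness and quantum-group-morphism arguments. The obstacle you flag is in fact harmless, exactly as the paper asserts: subtracting the four quadratic relations in \eqref{commu} pairwise gives $bb^*=b^*b$ once $c^*c=cc^*$, and then $dd^*=d^*d$, while $bc=cb=-d^2$ together with Theorem \ref{FPR} (applied with $\lambda=1$ to the normal elements $b$, $c$, $d$) upgrades all the commutation relations to their adjoints.
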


\subsection{Case $q=0$}\label{q0} Since there is no obvious definition of $\mathrm{S}_0\mathrm{O}(3)$, let us first describe the compact quantum semigroup $\QMap^\om\bigl(\qs(M_2)\bigr)$ for $q=0$. Note that in this case the state $\om$ is not faithful. We can easily find the $\cst$-algebra with comultiplication $(\gA,\bDelta)$ describing this quantum semigroup.

\begin{prop}\label{Des0}
For $q=0$ the $\cst$-algebra $\gA$ is the universal $\cst$-algebra generated by two elements $\beta$ and $\delta$ satisfying the following relations
\begin{equation}\label{re0}
\begin{aligned}
\beta\beta^*&=\I,&\delta^2&=0,\\
\beta\delta&=0,&\beta\delta^*&=0,\\
&\qquad\beta^*\beta+\delta^*\delta+\delta\delta^*=\I.
\end{aligned}
\end{equation}
The colmultiplication $\bDelta$ acts on generators in the following way:
\[
\begin{split}
\bDelta(\beta)&=\beta\tens\beta,\\
\bDelta(\delta)&=\delta\tens\beta+\beta^*\beta\tens\delta+\delta^*\delta\tens\delta,
\end{split}
\]
while the counit maps $\beta$ to $1$ and $\delta$ to $0$.

The action of $\QMap^{\om}\bigl(\qs(M_2)\bigr)$ on $\qs(M_2)$ is given by $\bPhi\in\Mor(M_2,M_2\tens\gA)$ defined by
\[
\bPhi(\bn)=\begin{bmatrix}0&\beta\\0&\delta\end{bmatrix}.
\]
\end{prop}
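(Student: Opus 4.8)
The plan is to repeat the proof of Proposition \ref{Des} almost verbatim, exploiting the fact that for $q=0$ the state $\om$ degenerates to the functional $m\mapsto m_{1,1}$ extracting the top-left matrix entry. Identifying $M_2\tens\AM$ with $M_2(\AM)$, this means $\om\tens\id_{\AM}$ simply returns the $(1,1)$ entry of a matrix. By \cite[Theorem 5.4]{qs} the algebra $\gA$ is the quotient of $\AM$ by the ideal generated by $\{(\om\tens\id_{\AM})\PM(m)-\om(m)\I\st m\in M_2\}$; writing $\alpha,\beta,\gamma,\delta$ for the images of $\malpha,\mbeta,\mgamma,\mdelta$ under the quotient map $\pi$, I would evaluate these generators of the ideal on the basis $\{\bn\bn^*,\bn,\bn^*,\bn^*\bn\}$ using the matrix form \eqref{Phibaza} of $\PM$.

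First I would read off the state-preservation conditions. The condition from $\bn$ forces the $(1,1)$ entry $\alpha$ of $\bPhi(\bn)$ to vanish, so $\alpha=0$; the condition from $\bn^*\bn$ gives $\alpha^*\alpha+\gamma^*\gamma=0$, and since both summands are positive this forces $\gamma=0$ as well; the condition from $\bn\bn^*$ then reads $\beta\beta^*=\I$. This simultaneous collapse of $\alpha$ and $\gamma$ is the decisive new feature of the degenerate case and is exactly what makes the resulting $\cst$-algebra so much smaller than in Proposition \ref{Des}.

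Substituting $\alpha=\gamma=0$ into the relations \eqref{nstnkw} of $\AM$, the surviving relations reduce precisely to $\beta\beta^*=\I$, $\delta^2=0$, $\beta\delta=0$, $\beta\delta^*=0$ and $\beta^*\beta+\delta^*\delta+\delta\delta^*=\I$, which is the list \eqref{re0}. The comultiplication and counit then come for free: because $\pi$ is a quantum semigroup morphism by \cite[Theorem 5.4]{qs}, the formulas for $\bDelta(\beta)$ and $\bDelta(\delta)$ are obtained from \eqref{BigDelta} by deleting every term that contains a factor $\alpha$, $\gamma$ or an adjoint thereof in either tensor leg, and $\bepsilon$ is inherited in the same way; the action formula $\bPhi(\bn)=\begin{bmatrix}0&\beta\\0&\delta\end{bmatrix}$ is just \eqref{Phibaza} with $\alpha=\gamma=0$.

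Finally, to show that $\gA$ is the \emph{universal} $\cst$-algebra for \eqref{re0}, I would close the loop as at the end of Proposition \ref{Des}: let $\widetilde\gA$ be the universal $\cst$-algebra generated by $\beta,\delta$ subject to \eqref{re0}, and verify that $N=\begin{bmatrix}0&\beta\\0&\delta\end{bmatrix}\in M_2(\widetilde\gA)$ satisfies $N^2=0$ and $NN^*+N^*N=\I$, both of which unfold into exactly the relations \eqref{re0}; hence $\bn\mapsto N$ defines a morphism in $\Mor(M_2,M_2\tens\widetilde\gA)$, and evaluating it on the basis shows it preserves $\om$, so the universal property of $\QMap^\om(\qs(M_2))$ yields $\gA=\widetilde\gA$. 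I expect no serious difficulty, the only point deserving care being that here $\om$ is \emph{not} faithful; I would therefore check at the outset that the construction and quotient description of \cite[Theorem 5.4]{qs} make no use of faithfulness, after which every remaining step is routine bookkeeping parallel to the case $0<q<1$.
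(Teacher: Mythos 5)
Your proposal is correct and follows exactly the route the paper intends: the paper's own proof of Proposition \ref{Des0} consists of the single remark that it is ``completely analogous'' to that of Proposition \ref{Des}, and your argument is precisely that analogy carried out in detail (state preservation at $q=0$ forces $\alpha=0$ and $\gamma=0$, the relations \eqref{re0} then drop out of \eqref{nstnkw}, the comultiplication, counit and action descend via the quantum semigroup morphism property of the quotient map, and universality is closed off exactly as at the end of the proof of Proposition \ref{Des}). All the computational details you give check out, so there is nothing to correct.
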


The proof of Proposition \ref{Des0} is completely analogous to that of Proposition \ref{Des}.

\begin{rem}\label{dn0}
Let us note that the relations \eqref{re0} can be realized inthe following way: let $H$ be an infinite dimensional Hilbert space and let $U$ be a unitary map $H\to{H}\oplus{H}\oplus{H}$. Then we can put
\[
\begin{split}
\beta:H\oplus{H}\oplus{H}\ni\begin{bmatrix}x\\y\\z\end{bmatrix}&\longmapsto{Ux}
\in{H}\oplus{H}\oplus{H},\\
\delta:H\oplus{H}\oplus{H}\ni\begin{bmatrix}x\\y\\z\end{bmatrix}&\longmapsto
\begin{bmatrix}0\\0\\y\end{bmatrix}
\in{H}\oplus{H}\oplus{H}.
\end{split}
\]
Note that in this case $\delta\beta\neq{0}=\beta\delta$, $\delta^*\beta\neq{0}=\beta\delta^*$ and $\beta^*\beta\neq\I=\beta\beta^*$. In particular $\delta\neq0$ in $\gA$ and $\gA$ is not commutative. In fact it clearly contains a copy of the Toepliz algebra.
\end{rem}

\begin{prop}\label{ncqg0}
The quantum semigroup $\QMap^{\omega_0}\bigl(\qs(M_2)\bigr)$ is not a compact quantum group.
\end{prop}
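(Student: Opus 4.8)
The plan is to assume that $(\gA,\bDelta)$ is a compact quantum group and to derive a contradiction from the existence and invariance of its Haar state, exploiting the fact that the generator $\beta$ is a \emph{group-like coisometry which fails to be unitary}. First I would record the structural facts needed from Proposition~\ref{Des0}. Since $\beta\beta^*=\I$, the element $P:=\beta^*\beta$ is a projection, and because $\bDelta$ is a $*$-homomorphism with $\bDelta(\beta)=\beta\tens\beta$ we have $\bDelta(P)=(\beta^*\tens\beta^*)(\beta\tens\beta)=P\tens P$. Moreover $\I-P=\delta^*\delta+\delta\delta^*$, and $\delta\neq0$ in $\gA$ (the representation of Remark~\ref{dn0} realises the relations \eqref{re0} with $\delta\neq0$, and $\gA$ is universal), so $P\neq\I$; thus $\beta$ is a genuine, non-unitary coisometry.

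Now suppose $(\gA,\bDelta)$ were a compact quantum group. By Woronowicz's theory it would carry a bi-invariant Haar state $h$. Applying left invariance $(h\tens\id)\bDelta(x)=h(x)\I$ to the group-like projection $P$ gives $h(P)\,P=h(P)\,\I$, i.e.\ $h(P)(P-\I)=0$; since $P\neq\I$ this forces $h(P)=0$, that is $h(\beta^*\beta)=0$.

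The final step is to turn $h(\beta^*\beta)=0$ into an impossibility. I would pass to the reduced compact quantum group: the GNS construction for $h$ yields a surjection $\lambda\colon\gA\to\gA_r$ onto a $\cst$-algebra $\gA_r$ on which the induced Haar state $h_r$ is \emph{faithful} and satisfies $h_r\comp\lambda=h$. Then $h_r\bigl(\lambda(\beta)^*\lambda(\beta)\bigr)=h(\beta^*\beta)=0$, so faithfulness gives $\lambda(\beta)=0$; but $\lambda(\beta)\lambda(\beta)^*=\lambda(\beta\beta^*)=\lambda(\I)=\I\neq0$, a contradiction. Hence $\QMap^{\omega_0}\bigl(\qs(M_2)\bigr)$ is not a compact quantum group.

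The step I expect to be the real crux is the last one. Invariance of $h$ only ``sees'' $\beta$ through the scalar $h(\beta^*\beta)$, and on the full algebra $\gA$ the Haar state need not be faithful, so $h(\beta^*\beta)=0$ does not by itself give $\beta=0$; indeed in the GNS representation $\pi_h(\beta)$ remains a nonzero coisometry annihilating the cyclic vector. The essential ingredient is therefore the faithfulness of the Haar state on the \emph{reduced} algebra, which is precisely what forbids a non-unitary group-like coisometry and so distinguishes a compact quantum group from the mere compact quantum semigroup at hand. A Haar-free alternative would be to verify directly that the cancellation density $\overline{\operatorname{span}}\{\bDelta(a)(\I\tens b)\}=\gA\tens\gA$ fails; but detecting this requires testing against the noncommutative (Toeplitz) part of $\gA$ rather than its characters, which all factor through the group $\TT$, making the direct computation considerably more delicate. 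For that reason I would favour the Haar-state argument above.
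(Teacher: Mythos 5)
Your proof is correct, and it is genuinely leaner than the paper's. Both arguments turn on the same two objects --- the group-like projection $Y=\beta^*\beta$ (your $P$) and the reduced quantum group $\gA_r$ with reducing map $\lambda$ --- but the paper proceeds by cases: if the Haar state were faithful, $Y$ would lie in the dense Hopf $*$-algebra (by \cite[Theorem 2.6(2)]{cqg}) and the coinverse would have to invert it, which is impossible for a proper projection; otherwise $\lambda(Y)\in\{0,\I\}$, the case $\lambda(Y)=\I$ being excluded because a commutative compact quantum group is simultaneously reduced and universal (forcing $\delta=0$ in $\gA$, against Remark~\ref{dn0}), and the case $\lambda(Y)=0$ by arguing that $\gA_r=M_2$, which admits no compact quantum group structure. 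You bypass all three of these auxiliary inputs: invariance of $h$ applied to the group-like projection gives $h(Y)(Y-\I)=0$, hence $h(Y)=0$ since $Y\neq\I$ in $\gA$; faithfulness of $h_r$ on $\gA_r$ then gives $\lambda(\beta)=0$, which collides with $\lambda(\beta)\lambda(\beta)^*=\lambda(\beta\beta^*)=\I\neq0$. In effect you run the paper's case $\lambda(Y)=0$ directly and show the other case never needs to be considered; your ending is also cleaner than the paper's own, which, having obtained $\lambda(\beta)=0$, detours through the claim $\gA_r=M_2$ even though the relation $\beta\beta^*=\I$ makes $\lambda(\beta)=0$ absurd on the spot (the quotient relations are then inconsistent, so that case would in fact give $\gA_r=0$, not $M_2$). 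What your route buys is economy and a correct isolation of the crux --- faithfulness of the Haar state on the reduced algebra, equivalently that the null space of $h$ is a two-sided ideal, which is precisely where the quantum \emph{group} hypothesis enters; what the paper's route buys is a more explicit picture of what $\gA_r$ would have to look like in each case, at the price of heavier machinery.
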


\begin{proof}
Assume that $(\gA,\bDelta)$ is a compact quantum group. The element $Y=\beta^*\beta$ is a group-like projection. If the Haar measure of $(\gA,\bDelta)$ were faithful, $Y$ would have to belong to the dense Hopf $*$-algebra $\cA$ of $\gA$ (by \cite[Theorem 2.6(2)]{cqg}). In that case the coinverse of $Y$ would have to be its inverse. However, we know that in $\gA$ the element $Y$ is a proper projection (cf.~Remark \ref{dn0}), and as such cannot be invertible.

This means that the Haar measure of $(\gA,\bDelta)$ is not faithful and the image of $Y$ under the reducing map $\lambda:\gA\to\gA_r$ (\cite[Page 656]{pseudogr}) is either $\I$ or $0$.

If $\lambda(Y)=\I$ then $\lambda(\delta)$ must be $0$ and $\gA_r$ is generated by a single unitary $\lambda(\beta)$. In that case, however, the $\cst$-algebra $\gA_r$ is commutative and thus the quantum group $(\gA_r,\bDelta_r)$ is not only reduced, but also universal.
Therefore we must have $\delta=0$ in $\gA$ which is not true by Remark \ref{dn0}.

The only remaining possibility is that $\lambda(Y)=0$. In this case $\lambda(\beta)=0$ and $\gA_r$ is generated by $x=\lambda(\delta)$ which satisfies
\[
x^2=0\quad\text{and}\quad{xx^*+x^*x=\I.}
\]
It follows that $\gA_r=M_2$. However this $\cst$-algebra does not admit a compact quantum group structure. This contradiction shows that the assumption that $(\gA,\bDelta)$ was a compact quantum group was false.
\end{proof}

Before continuing let us introduce the action of the classical group $\TT$ on $M_2$ preserving $\omega_0$. Let $\bu$ be the standard generator of $\C(\TT)$ then the morphism $\Psi_0\in\Mor\bigl(M_2,M_2\tens\C(\TT)\bigr)$
\begin{equation}\label{acu}
\Psi_0(\bn)=\begin{bmatrix}0&\bu\\0&0\end{bmatrix}
\end{equation}
describes the action by automorphism sending for each $\mathrm{e}^{\mathrm{i}\varphi}\in\TT$ the element $\bn$ to $\mathrm{e}^{\mathrm{i}\varphi}\bn$. Now a very similar reasoning to that given in the proof of Proposition \ref{ncqg0} leads to the following result:

\begin{thm}
Let $\cG=(\gB,\Delta_{\gB})$ be a compact quantum group and let $\Psi_{\gB}\in\Mor(M_2,M_2\tens\gB)$ be a continuous action of $\cG$ on $\qs(M_2)$ preserving the state $\omega_0$. Then there exists a unique $\Gamma\in\Mor\bigl(\C(\TT),\gB)$ such that
\begin{equation}\label{ac0P}
(\id_{M_2}\tens\Gamma)\comp\Psi_0=\Psi_{\gB}.
\end{equation}
\end{thm}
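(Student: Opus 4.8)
The plan is to mimic the structure used in the $q=0$ case for the semigroup $\QMap^{\omega_0}$, since the theorem asserts that the classical circle $\TT$ is the universal compact quantum group acting continuously on $\qs(M_2)$ and preserving $\omega_0$. First I would invoke the universal property of $\QMap^{\omega_0}\bigl(\qs(M_2)\bigr)=(\gA,\bDelta)$ described in Proposition \ref{Des0}: given the continuous action $\Psi_{\gB}$ preserving $\omega_0$, there is a unique $\Lambda\in\Mor(\gA,\gB)$ with $(\id_{M_2}\tens\Lambda)\comp\bPhi=\Psi_{\gB}$. Writing $b=\Lambda(\beta)$ and $d=\Lambda(\delta)$, the images satisfy the relations \eqref{re0}, so in particular $bb^*=\I$, $d^2=0$, $bd=bd^*=0$, and $b^*b+d^*d+dd^*=\I$.

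The crucial extra input, exactly as in Proposition \ref{ncqg0}, comes from continuity of the action together with the compact quantum group structure on $\cG$. Since $\Psi_{\gB}$ is continuous, by the Podle\'s-condition results (Proposition \ref{gladkie}, i.e.\ the smoothness theorems of \cite{marciniak,podles,PodMu}) the elements $b,d$ lie in the dense Hopf $*$-algebra $\cB$, so I may apply the counit $\epsilon_{\cB}$ and the antipode $\kappa_{\cB}$. I would then argue that $Y=b^*b$ is a group-like projection in $\cB$, hence invertible there, which forces $b^*b=\I$; combined with $bb^*=\I$ this makes $b$ a unitary. The key step is showing $d=0$: one invokes the relation $b^*b+d^*d+dd^*=\I$, which now reads $d^*d+dd^*=0$, forcing $d=0$. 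The remaining generator $b$ is then a single unitary, so $\gB$ receives a morphism $\Gamma\in\Mor\bigl(\C(\TT),\gB\bigr)$ sending the generator $\bu$ to $b$, and the factorization $\Lambda=\Gamma\comp\Lambda_0$ follows, where $\Lambda_0$ sends $\beta\mapsto\bu$, $\delta\mapsto0$ (matching \eqref{acu}). Uniqueness of $\Gamma$ follows because $\C(\TT)$ is generated by $\bu$ and \eqref{ac0P} fixes $\Gamma(\bu)=b$.

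The main obstacle I anticipate is justifying that the \emph{continuity} (Podle\'s condition) of $\Psi_{\gB}$ really does force $d=0$, since the underlying semigroup $\gA$ contains a copy of the Toeplitz algebra with $\delta\neq0$ (Remark \ref{dn0}). The point is that $\QMap^{\omega_0}\bigl(\qs(M_2)\bigr)$ itself is \emph{not} a compact quantum group (Proposition \ref{ncqg0}), so the factorization through a genuine compact quantum group must collapse the noncommutative part. I would carry this out by the same dichotomy argument as in Proposition \ref{ncqg0}: the group-like projection $Y=b^*b$ must be either $\I$ or $0$ in any compact quantum group quotient, the case $Y=0$ being excluded because it would force $b=0$ contradicting $bb^*=\I$. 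Hence $Y=\I$, $b$ is unitary, and $d^*d+dd^*=0$ yields $d=0$. Once this is established the rest is the routine verification that $\Gamma$ intertwines the actions and is a quantum group morphism, using that $\Delta_{\gB}(b)=b\tens b$ transports from $\bDelta(\beta)=\beta\tens\beta$.
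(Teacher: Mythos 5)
Your proposal is correct and follows essentially the same route as the paper's proof: invoke the universal property of $\QMap^{\omega_0}\bigl(\qs(M_2)\bigr)$ to get $\Lambda$ and the elements $b,d$, use continuity (Podle\'s condition) to place $b,d$ in the Hopf $*$-algebra $\cB$, apply the group-like projection dichotomy to $b^*b$, conclude $b$ is unitary and $d=0$, and define $\Gamma$ by $\Gamma(\bu)=b$. The only (harmless) deviation is in excluding the case $b^*b=0$: you note it forces $b=0$, contradicting $bb^*=\I$, which is an even more direct argument than either of the two exclusions the paper sketches.
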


\begin{proof}
We keep the notation from Proposition \ref{Des0}. Let $\Lambda\in\Mor(\gA,\gB)$ be the unique morphism satisfying
\[
(\id_{M_2}\tens\Lambda)\comp\bPhi=\Psi_{\gB}
\]
and let
\[
b=\Lambda(\beta),\quad{d}=\Lambda(\delta).
\]
In terms of $b$ and $d$ the action of $\cG$ on $\qs(M_2)$ is given by
\begin{equation}\label{ac0}
\Psi_{\gB}(\bn)=\begin{bmatrix}0&b\\0&d\end{bmatrix}.
\end{equation}

We know that $b$ and $d$ belong to the dense Hopf $*$-subalgebra $\cB$ of $\gB$. Moreover
\[
\begin{split}
\Delta_{\gB}(b)&=b\tens{b},\\
\Delta_{\gB}(d)&=d\tens{b}+b^*b\tens{d}+d^*d\tens{d}.
\end{split}
\]
As noted in the proof of Proposition \ref{ncqg0} $b^*b$ is a group-like projection in $\cB$, and so it is either $0$ of or $\I$. 

The possibility $b=0$ can be excluded in many ways. Either we show that in that case $d^*d$ is a proper projection which is also group-like or we use the fact that $d^2=0$ implies that $\epsilon_{\cB}(d)=0$, so that $d^*d=(\id_{\cB}\tens\epsilon_{\cB})(d^*d\tens{d})=0$ which is impossible.

This means that $b$ is unitary and $d=0$. Consequently there exists a unique $\Gamma\in\Mor\bigl(\C(\TT),\gB)$ such that
\begin{equation}\label{bub}
\Gamma(\bu)=b.
\end{equation}
Comparing \eqref{acu} and \eqref{ac0} shows that \eqref{bub} is equivalent to \eqref{ac0P}.
\end{proof}

\section{General actions on $M_2$}\label{last}

Let $\cG=(\gB,\Delta_{\gB})$ be a compact quantum group acting continuously on $\qs(M_2)$. Let $\Psi_{\gB}\in\Mor(M_2,M_2\tens\gB)$ be the morphism describing this action. Then there exists an invariant state for $\Psi_{\gB}$. Indeed let $h$ be that Haar measure of $\cG$ and $\phi$ be any state on $M_2$. Then $\eta=(\phi\tens{h})\comp\Psi_{\gB}$ is an invariant state: for any $\in{M_2}$ and $\mu\in\gB'$
\[
\begin{split}
\mu\bigl((\eta\tens\id_{\gB})\Psi_{\gB}(m)\bigr)
&=\mu\bigl((\phi\tens{h}\tens\id_{\gB})(\Psi_{\gB}\tens\id_{\gB})\Psi_{\gB}(m)\bigr)\\
&=(\phi\tens{h}\tens\mu)(\id_{M_2}\tens\Delta_{\gB})\Psi_{\gB}(m)\\
&=(\phi\tens\id_{\gB})\bigl(\id\tens[h*\mu]\bigr)\Psi_{\gB}(m)\\
&=(\phi\tens\id_{\gB})(\id_{M_2}\tens{h})\Psi_{\gB}(m)\mu(\I)
=\mu\bigl(\eta(m)\I\bigr)
\end{split}
\]
(cf.~\cite[Lemma 4]{boca}).

States on $M_2$ is correspond to density matrices, so for $\eta$ there exists a unique $\rho$  such that $\eta(m)=\tr(\rho{m})$ for all $m$ and which is conjugate to one of the matrices
\begin{equation}\label{rhofam}
\rho_{q}=\tfrac{1}{1+q^2}\begin{bmatrix}1&0\\0&q^2\end{bmatrix}\qquad(q\in[0,1]).
\end{equation}
Note also that $\tr(\rho_q{m})=\om(m)$. Let $u$ be the unitary element of $M_2$ such that $\rho=u\rho_qu^*$. Then $\eta(umu^*)=\om(m)$ for all $m$. Therefore if we define
\[
\widetilde{\Psi}_{\gB}:M_2\ni{m}\longmapsto(u^*\tens\I)\Psi_{\gB}(umu^*)(u^*\tens\I)\in{M_2}\tens\gB
\]
we obtain an action $\widetilde{\Psi}_{\gB}\in\Mor(M_2,M_2\tens\gB)$: for all $m\in{M_2}$
\[
\begin{split}
\bigl(\widetilde{\Psi}_{\gB}\tens\id_{\gB}\bigr)\widetilde{\Psi}_{\gB}(m)
&=(u^*\tens\I\tens\I)\bigl[(\Psi_{\gB}\tens\id_{\gB})
\bigl((u\tens\I)\bigl(\widetilde{\Psi}_{\gB}(m)\bigr)(u^*\tens\I)\bigr)\bigr](u\tens\I\tens\I)\\
&=(u^*\tens\I\tens\I)\bigl[(\Psi_{\gB}\tens\id_{\gB})\Psi_{\gB}(umu^*)\bigr](u\tens\I\tens\I)\\
&=(u^*\tens\I\tens\I)\bigl[(\id_{M_2}\tens\Delta_{\gB})\Psi_{\gB}(umu^*)\bigr](u\tens\I\tens\I)\\
&=(\id_{M_2}\tens\Delta_{\gB})\bigl((u^*\tens\I)\Psi_{\gB}(umu^*)(u\tens\I)\bigr)=
(\id_{M_2}\tens\Delta_{\gB})\widetilde{\Psi}_{\gB}(m)
\end{split}
\]
which is continuous because
\[
\begin{split}
\bigl\{&\widetilde{\Psi}_{\gB}(m)(\I\tens{b})\st{m\in{M_2}},\;b\in\gB\bigr\}\\
&=\bigl\{(u^*\tens\I)\Psi_{\gB}(umu^*)(u\tens\I)(\I\tens{b}\st{m\in{M_2}},\;b\in\gB\bigr\}\\
&=(u^*\tens\I)\bigl\{\Psi_{\gB}(k)(\I\tens{b}\st{k\in{M_2}},\;b\in\gB\bigr\}(u\tens\I)
\end{split}
\]
is linearly dense in $M_2\tens\gB$. Moreover for any $m\in{M_2}$ we have
\[
\begin{split}
(\om\tens\id_{\gB})\widetilde{\Psi}_{\gB}(m)
&=(\om\tens\id_{\gB})\bigl((u^*\tens\I)\Psi_{\gB}(umu^*)(u\tens\I)\bigr)\\
&=(\eta\tens\id_{\gB})\widetilde{\Psi}_{\gB}(umu^*)\\
&=\eta(umu^*)\I=\om(m)\I,
\end{split}
\]
so $\widetilde{\Psi}$ preserves $\om$.

In order to state the next theorem in a readable way let us use the symbol $\mathrm{S}_0\mathrm{O}(3)$ for the group $\TT$. For each $q\in[0,1]$ the symbol $\Psi_q$ denotes the action of $\SqO$ on $\qs(M_2)$ described in Subsections \ref{Act}, \ref{q1} and \ref{q0}.

Applying the results of Sections \ref{char} and \ref{q10} we get the following description of all continuous actions of compact quantum groups on $M_2$. 

\begin{thm}\label{dzialania}
Let $\cG=(\gB,\Delta_{\gB})$ be a compact quantum group and let $\Psi_{\gB}\in\Mor(M_2,M_2\tens\gB)$ be a continuous action of $\cG$ on $\qs(M_2)$. Then there exists a unitary $u\in{M_2}$, $q\in[0,1]$ and $\Gamma\in\Mor\bigl(\C(\SqO),\gB)$ such that
\[
\Psi_{\gB}(m)=(\id_{M_2}\tens\Gamma)\bigl((u\tens\I)\Psi_q(u^*mu)(u^*\tens\I)\bigr).
\]
$\Gamma$ is unique for each fixed pair $(q,u)$. Moreover if $\Psi_{\gB}$ is ergodic then $q$ and $u$ are unique.
\end{thm}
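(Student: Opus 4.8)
The plan is to reduce everything to the state-preserving situation already handled in Sections \ref{char} and \ref{q10}, using the invariant state produced in the computation preceding the theorem. First I would take the invariant state $\eta=(\phi\tens{h})\comp\Psi_{\gB}$ constructed above, write it as $\eta(m)=\tr(\rho{m})$, and diagonalize its density matrix: every such $\rho$ has the form $u\rho_q u^*$ for some $q\in[0,1]$ and a unitary $u\in{M_2}$, with $\rho_q$ as in \eqref{rhofam}. Conjugating by $u$ produces the action $\widetilde{\Psi}_{\gB}$ displayed above, which the preceding computation shows is again a continuous action of $\cG$ on $\qs(M_2)$ and which preserves $\om$ because $\tr(\rho_q\,\cdot)=\om$.

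With $\widetilde{\Psi}_{\gB}$ state-preserving I would invoke the appropriate universal result: Theorem \ref{main} when $q\in]0,1[$, and the two theorems of Section \ref{q10} for $q=1$ and $q=0$ (recall $\mathrm{S}_1\mathrm{O}(3)=\SO(3)$ and, by convention, $\mathrm{S}_0\mathrm{O}(3)=\TT$). Each furnishes a unique $\Gamma\in\Mor\bigl(\C(\SqO),\gB\bigr)$ with $(\id_{M_2}\tens\Gamma)\comp\Psi_q=\widetilde{\Psi}_{\gB}$. Unwinding the conjugation --- i.e.\ substituting $\widetilde{\Psi}_{\gB}$ back into its definition and moving the factors $u\tens\I$ and $u^*\tens\I$, which act only on the first leg, past $\id_{M_2}\tens\Gamma$ --- yields exactly the asserted formula for $\Psi_{\gB}$. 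Uniqueness of $\Gamma$ for a fixed pair $(q,u)$ is then immediate: the formula determines $\widetilde{\Psi}_{\gB}=(\id_{M_2}\tens\Gamma)\comp\Psi_q$, and $\Gamma$ is forced by the uniqueness clause of the relevant universal property.

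For the ergodicity statement I would first show that an ergodic action admits a \emph{unique} invariant state. The map $E(m)=(\id_{M_2}\tens{h})\Psi_{\gB}(m)$ lands in the fixed-point algebra: applying $\id\tens\id\tens{h}$ to both sides of \eqref{COACT} and using right-invariance $(\id\tens{h})\comp\Delta_{\gB}=h(\cdot)\I$ gives $\Psi_{\gB}(E(m))=E(m)\tens\I$, so ergodicity forces $E(m)=\eta(m)\I$. Any invariant state $\eta'$ then satisfies $\eta'(m)=(\eta'\tens{h})\Psi_{\gB}(m)=\eta'(E(m))=\eta(m)$, whence $\eta'=\eta$. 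Thus $\rho$, and with it the ordered eigenvalue ratio $q$, is uniquely determined, so $q$ is unique.

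The point needing the most care is the uniqueness of $u$. Since $u$ enters only through the conjugation $m\mapsto(u\tens\I)\Psi_q(u^*mu)(u^*\tens\I)$, it is determined solely up to the stabilizer of $\rho_q$ in the unitary group of $M_2$. For $q\in]0,1[$ the eigenvalues of $\rho_q$ are distinct, so this stabilizer is the diagonal torus, and I would have to argue that the surviving phase ambiguity $\bn\mapsto\mathrm{e}^{\mathrm{i}\varphi}\bn$ is the intended freedom; in particular, for $q=1$ the density matrix $\rho_1=\tfrac12\I$ is central and $u$ may be any unitary, so ``unique'' must be read modulo this (reabsorbable) ambiguity. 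Pinning down the precise sense in which $u$ is unique, and checking its compatibility with the uniqueness of $\Gamma$, is the main obstacle; the existence half of the theorem is essentially a bookkeeping assembly of results already proved.
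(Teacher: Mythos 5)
Your proposal is correct and follows the paper's own route: the paper likewise takes the invariant state $\eta=(\phi\tens{h})\comp\Psi_{\gB}$, diagonalizes its density matrix as $u\rho_qu^*$ with $\rho_q$ from \eqref{rhofam}, passes to the conjugated action $\widetilde{\Psi}_{\gB}$ (shown beforehand to be a continuous, $\om$-preserving action), and then invokes Theorem \ref{mainR} together with the two theorems of Section \ref{q10}, the uniqueness of $\Gamma$ for a fixed pair $(q,u)$ being exactly the uniqueness clause of those results. The only divergence is that where the paper simply cites \cite[Lemma 4]{boca} for the uniqueness of the invariant state of an ergodic action, you prove it; your argument with $E=(\id_{M_2}\tens{h})\comp\Psi_{\gB}$ landing in the fixed-point algebra is correct and is essentially the cited proof.

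Your worry about the uniqueness of $u$ is well founded, and you should not treat it as a gap in your own argument: the paper's justification of the last clause --- that the density matrix of the unique invariant state ``is conjugate to a unique matrix from the family \eqref{rhofam}'' --- establishes uniqueness of $q$ only, and literal uniqueness of $u$ is false. Indeed, if $v$ is any unitary commuting with $\rho_q$, then $m\mapsto{v^*mv}$ preserves $\om$, so $m\mapsto(v\tens\I)\Psi_q(v^*mv)(v^*\tens\I)$ is a continuous $\om$-preserving action of $\SqO$ on $\qs(M_2)$ and hence, by the relevant universality theorem, equals $(\id_{M_2}\tens\Gamma_v)\comp\Psi_q$ for a quantum group morphism $\Gamma_v$ of $\C\bigl(\SqO\bigr)$ (invertible, with inverse $\Gamma_{v^*}$); replacing $(u,\Gamma)$ by $(uv,\Gamma\comp\Gamma_{v^*})$ then reproduces the same $\Psi_{\gB}$, and for scalar $v$ even $\Gamma$ is unchanged. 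So $u$ is determined only up to the unitary stabilizer of $\rho_q$: the diagonal torus for $q\in[0,1[$, and the whole unitary group of $M_2$ for $q=1$ (for the ergodic trace-preserving action of $\SO(3)$ every unitary $u$ works). The invariant content, which your argument does establish, is that $q$ and the conjugated density matrix $u\rho_qu^*$ --- equivalently the invariant state $\om(u^*\cdot\,u)$ --- are unique; the theorem's phrase ``$u$ is unique'' can only be read in that sense, and the paper offers nothing beyond this either.
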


The only element of Theorem \ref{dzialania} which requires a comment at this stage is the uniqueness statement. It follows from the fact that for an ergodic continuous action of a compact quantum group there exists a unique invariant state (\cite[Lemma 4]{boca}) whose density matrix is then conjugate to a unique matrix from the family \eqref{rhofam}.

\end{document}